 \newtheorem{thm}{Theorem}
 \newtheorem{cor}[thm]{Corollary}
 \newtheorem{lem}[thm]{Lemma}
 \newtheorem{prop}[thm]{Proposition}
 \theoremstyle{definition}
 \theoremstyle{definition}
  \theoremstyle{definition}
 \theoremstyle{remark}
 \newtheorem{rem}[thm]{Remark}
 \theoremstyle{remark}
 \theoremstyle{definition}
 \newtheorem{example}[thm]{Example}
\numberwithin{thm}{section}
\numberwithin{equation}{section}
 \newcommand{\Frob}{\mathrm{Frob}}
 \newcommand{\Aut}{\mathrm{Aut}}
 \newcommand{\End}{\mathrm{End}}
 \newcommand{\ord}{\mathrm{ord}}
 \newcommand{\Gal}{\mathrm{Gal}}
 \newcommand{\GL}{\mathrm{GL}}
 \newcommand{\sep}{\mathrm{sep}}
 \newcommand{\alg}{\mathrm{alg}}
 \newcommand{\rank}{\mathrm{rank}}
 \renewcommand{\mod}{\mathrm{mod}}
 \newcommand{\fl}{\mathfrak l}
 \newcommand{\fb}{\mathfrak b}
 \newcommand{\fp}{\mathfrak p}
 \newcommand{\fq}{\mathfrak q}
 \newcommand{\fn}{\mathfrak n}
 \newcommand{\fm}{\mathfrak m}
 \newcommand{\fd}{\mathfrak d}
 \newcommand{\fc}{\mathfrak c}
\newcommand{\cZ}{\mathcal{Z}}
 \newcommand{\cO}{\mathcal{O}}
 \newcommand{\cP}{\mathcal{P}}
 \newcommand{\cE}{\mathcal{E}}
 \renewcommand{\cD}{\mathcal{D}}
 \newcommand{\C}{\mathbb{C}}
 \newcommand{\F}{\mathbb{F}}
 \newcommand{\Q}{\mathbb{Q}}
 \newcommand{\Z}{\mathbb{Z}}
 \newcommand{\Nr}{\mathrm{Nr}}
 \newcommand{\eps}{\varepsilon}
 \newcommand{\To}{\longrightarrow}
 \newcommand{\Fi}{F_\infty}
 \newcommand{\La}{\Lambda}
\begin{document}

\title[Endomorphism rings of reductions of Drinfeld modules]{Endomorphism rings of reductions of Drinfeld modules}

\author{Sumita Garai}
\address{
	Department of Mathematics, 
	Pennsylvania State University,
	University Park, PA 16802, USA
} 
\email{sxg386@psu.edu}

\author{Mihran Papikian}
\address{
Department of Mathematics, 
Pennsylvania State University,
University Park, PA 16802, USA
} 
\email{papikian@psu.edu}


\dedicatory{In memory of David Goss}

\subjclass[2010]{11G09, 11R58}

\keywords{Drinfeld modules, endomorphism rings}

\begin{abstract}  Let $A=\F_q[T]$ be the polynomial ring over $\F_q$, and 
$F$ be the field of fractions of $A$. Let $\phi$ be a Drinfeld $A$-module of rank $r\geq 2$ over $F$. 
For all but finitely many primes $\fp\lhd A$, one can reduce $\phi$ modulo $\fp$ 
to obtain a Drinfeld $A$-module $\phi\otimes\F_\fp$ of rank $r$ over $\F_\fp=A/\fp$. The 
endomorphism ring $\cE_\fp=\End_{\F_\fp}(\phi\otimes\F_\fp)$ is an order in an imaginary 
field extension $K$ of $F$ of degree $r$. Let $\cO_\fp$ be the integral closure of $A$ in $K$, 
and let $\pi_\fp\in \cE_\fp$ be the Frobenius endomorphism of $\phi\otimes\F_\fp$. 
Then we have the inclusion of orders $A[\pi_\fp]\subset \cE_\fp\subset \cO_\fp$ in $K$. 
We prove that if $\End_{F^\alg}(\phi)=A$, then for arbitrary non-zero ideals $\fn, \fm$ of $A$
there are infinitely many $\fp$ 
such that $\fn$ divides the index $\chi(\cE_\fp/A[\pi_\fp])$ and $\fm$ divides the index $\chi(\cO_\fp/\cE_\fp)$.  
We show that the index $\chi(\cE_\fp/A[\pi_\fp])$ is related to a reciprocity law for 
the extensions of $F$ arising from the division points of $\phi$. In the rank $r=2$ case 
we describe an algorithm for computing the orders $A[\pi_\fp]\subset \cE_\fp\subset \cO_\fp$, and give some 
computational data. 
\end{abstract}

\maketitle


\section{Introduction} 

Let $\F_q$ be a finite field with $q$ elements and of characteristic $p$. 
Let $A=\F_q[T]$ be the polynomial ring over $\F_q$ in an indeterminate $T$, and 
$F=\F_q(T)$ be the field of fractions of $A$. In this paper we study the 
endomorphism rings of the reductions of a fixed Drinfeld module defined over $F$. 
We are interested in theoretical, as well as computational, aspects of the theory of these rings. 
To state the main results of the paper, we first need to introduce some notation and terminology. 

\subsection{Notation and terminology}  
The degree $\deg(a)$ of $0\neq a\in A$ is its degree as a polynomial in $T$. The degree function extends to a valuation of $F$; 
the corresponding place of $F$ is denoted by $\infty$. Let $\Fi=\F_q(\!(1/T)\!)$ be the completion of $F$ at $\infty$. 
For a non-zero ideal $\fn\lhd A$, by abuse of notation, we denote by the same symbol the 
unique monic polynomial in $A$ generating $\fn$. We will call  a non-zero prime ideal of $A$ simply a \textit{prime} of $A$. 
Given a prime $\fp$ of $A$, we denote by $A_\fp$ (resp. $F_\fp$) the completion of $A$ at $\fp$ (resp. the fraction field of $A_\fp$); 
we also denote $\F_\fp=A/\fp$.  
Given a field $L$, we denote by  $L^\alg$ (resp. $L^\sep$) an algebraic (resp. separable) closure of $L$, and $G_L=\Gal(L^\sep/L)$.


Let $K$ be a field extension of $F$ of degree $r\geq 2$. Let $\cO_K$ be the integral closure of $A$ in $K$. 
An \textit{$A$-order} $\cO$ in $K$ is a subring of $K$ such that 
\begin{itemize}
\item[(i)] $A\subset \cO$;
\item[(ii)] $\cO$ is a finitely generated $A$-module (equiv. $\cO$ is an $A$-subalgebra of $\cO_K$);
\item[(iii)] $\cO$ contains an $F$-basis of $K$ (equiv. the quotient module $\cO_K/\cO$ has finite cardinality). 
\end{itemize} 
Since $A$ is a PID, $\cO$ is a free $A$-module of rank 
$$\rank_A \cO= \dim_F(\cO\otimes_A F)= \dim_F K=r.
$$

Let $\cO\subset \cO'$ be two $A$-orders in $K$. Since both modules $\cO$ and $\cO'$ have the same rank over $A$, and both contain $1$, 
we have
\begin{equation}\label{eq1}
\cO'/\cO\cong A/\fb_1 \times A/\fb_2 \times \cdots \times A/\fb_{r-1},
\end{equation}
for uniquely determined non-zero ideals $\fb_1, \dots, \fb_{r-1}\lhd A$ such that 
$$
\fb_1\mid \fb_2\mid \cdots \mid \fb_{r-1}. 
$$
We call $$\chi(\cO'/\cO)=\prod_{i=1}^{r-1}\fb_i$$ 
the \textit{index} of $\cO$ in $\cO'$, and $(\fb_1, \dots, \fb_{r-1})$ the \textit{refined index}.  
(Note that $\chi(\cO'/\cO)$ is the Fitting ideal of the $A$-module $\cO'/\cO$.) 
The \textit{conductor} of $\cO$ in $\cO'$ is 
$$
\fc=\{c\in K\mid c \cO'\subset \cO\}. 
$$
This is the largest ideal in $\cO'$ which is also an ideal in $\cO$. The conductor is related to the refined index  
by $\fc\cap A=\fb_{r-1}$. 

\subsection{Drinfeld modules} Let $L$ be a field provided with a structure $\gamma: A\to L$ of an 
$A$-algebra. Note that either $\gamma$ is injective or $\gamma$ factors through the quotient map $A\to \F_\fp\hookrightarrow L$  
for some prime $\fp$. 
Let $\tau$ be the Frobenius 
endomorphism of $L$ relative to $\F_q$, that is, the map $\alpha\mapsto \alpha^q$, and let $L\{\tau\}$ 
be the noncommutative ring of polynomials in $\tau$ with coefficients in $L$ 
and the commutation rule $\tau c=c^q\tau$ for any $c\in L$. 

A \textit{Drinfeld $A$-module over $L$ of rank $r\geq 1$} is a ring homomorphism 
\begin{align*}
\phi: A &\To L\{\tau\} \\ 
a &\mapsto \phi_a=\gamma(a)+\sum_{i=1}^{r\cdot \deg(a)} g_i(a) \tau^i 
\end{align*}
uniquely determined through 
$$
\phi_T=\gamma(T)+\sum_{i=1}^{r} g_i(T) \tau^i,\quad g_r(T)\neq 0. 
$$

A \textit{morphism} from the Drinfeld module $\phi$ to the Drinfeld module $\psi$ over $L$ 
is some $u\in L\{\tau\}$ such that $u\phi_a=\psi_a u$ for all $a\in A$ (it suffices to require this 
for $a=T$); $u$ is an \textit{isomorphsim} if $u\in L\{\tau\}^\times=L^\times$.  With this definition, the \textit{endomorphism ring} 
$$
\End_L(\phi)=\{u\in L\{\tau\}\mid u\phi_T=\phi_T u\}
$$
is the centralizer of $\phi(A)$ in $L\{\tau\}$. It is clear that $\End_L(\phi)$ contains in its center 
the subring $\phi(A)\cong A$, hence is an $A$-algebra. 
It can be shown that $\End_L(\phi)$ is a free $A$-module of rank $\leq r^2$, and $\End_L(\phi)\otimes_A \Fi$ is a 
division algebra over $\Fi$; see \cite{Drinfeld}. 

The Drinfeld module $\phi$ endows $L^\alg$ with an $A$-module structure, where $a\in A$ 
acts by $\phi_a$. The \textit{$a$-torsion} $\phi[a]\subset L^\alg$ of $\phi$ is the kernel of $\phi_a$, that is, the set of zeros 
of the polynomial 
\begin{equation}\label{eqPola}
\phi_a(x)=\gamma(a)x+\sum_{i=1}^{r\cdot \deg(a)}g_i(a)x^{q^i}\in L[x]. 
\end{equation}
It is clear that $\phi[a]$ 
has a natural structure of an $A$-module. 
If $a$ is not divisible by $\ker(\gamma)$, then 
$\phi[a]\cong (A/aA)^{\oplus r}$ and $\phi[a]\subset L^\sep$ (since $\phi'_a(x)=\gamma(a)\neq 0$). 
For a prime $\fl\lhd A$ 
different from $\ker(\gamma)$, the \textit{$\fl$-adic Tate module of $\phi$},   
$$
T_\fl(\phi)=\underset{\longleftarrow}{\lim}\ \phi[\fl^n]\cong A_\fl^{\oplus r},  
$$
carries a continuous Galois representation 
$$
\rho_{\phi, \fl}: G_L\to \Aut_{A_\fl}(T_\fl(\phi))\cong \GL_r(A_\fl). 
$$

\subsection{Main results} Let $\phi: A\to F\{\tau\}$ be a Drinfeld module of rank $r$ over $F$ defined by 
$$
\phi_T=T+g_1\tau+\cdots+g_r\tau^r. 
$$
(We will always implicitly assume that $\gamma: A\to F$ is the canonical embedding of $A$ into its field of fractions.)
We say that a prime $\fp\lhd A$ is a \textit{prime of good reduction} for $\phi$ if $\ord_\fp(g_i)\geq 0$ for $1\leq i\leq r-1$, 
and $\ord_\fp(g_r)=0$. In that case, we can consider $g_1, \dots, g_r$ as elements of $A_\fp$; denote by 
$\overline{g}$ the image of $g\in A_\fp$ under the canonical homomorphism $A_\fp\to A_\fp/\fp$. 
The \textit{reduction of $\phi$ at $\fp$} is the 
Drinfeld module $\phi\otimes\F_\fp$ over $\F_\fp$ given by 
$$
(\phi\otimes\F_\fp)_T=\overline{T}+\overline{g_1}\tau+\cdots+\overline{g_r}\tau^r.  
$$
Note that $\phi\otimes\F_\fp$ has rank $r$ since $\overline{g_r}\neq 0$. All but finitely many primes 
of $A$ are primes of good reduction for a given Drinfeld module $\phi$; we denote the set of these primes by $\cP(\phi)$. 

Let $\fp\in \cP(\phi)$ and $d=\deg(\fp)$. Let $\cE_\fp:=\End_{\F_\fp}(\phi\otimes\F_\fp)$. 
It is easy to see that $\pi_\fp:=\tau^d$ is in the center of $\F_\fp\{\tau\}$, hence $\pi_\fp\in \cE_\fp$. 
Using the theory of Drinfeld modules over finite fields it is easy to show that $A[\pi_\fp]\subset \cE_\fp$ 
are $A$-orders in an \textit{imaginary} field extension $K:=F(\pi_\fp)$ of $F$ of degree $r$ (``imaginary" means that 
there is a unique place of $K$ over $\infty$); see 
Proposition \ref{thm1}. Denote by $\cO_{\fp}$ the integral closure of $A$ in $K$. Thus, we have the inclusion of $A$-orders 
$$A[\pi_\fp]\subset \cE_\fp\subset \cO_\fp.$$ 

The endomorphism rings (and algebras) of Drinfeld modules 
over finite fields have been extensively studied; cf. \cite{Drinfeld2}, \cite{Angles}, \cite{GekelerFDM}, \cite{GekelerTAMS}, \cite{JKYu}. 
They play an important role in the theory of Drinfeld modules, as well as their applications to other areas, such as 
the theory of central simple algebras (cf. \cite{GekelerCR}) or the Langlands conjecture over function fields (cf. \cite{Drinfeld2}, \cite{LaumonCDV}). 
In this paper, we are primarily interested in the indices of $\cE_\fp/A[\pi_\fp]$ and $\cO_\fp/\cE_\fp$ as $\fp$ varies. 
We prove the following: 
\begin{thm}\label{thm11} Let $\phi$ be a Drinfeld $A$-module of rank $r\geq 2$ over $F$. 
Let $\fn$ and $\fm$ be arbitrary non-zero elements of $A$. 
\begin{enumerate}
\item The subset of primes $\fp\in \cP(\phi)$  such that $\fn$ divides $\chi(\cE_\fp/A[\pi_\fp])$ has positive density. 
\item If $\End_{F^\alg}(\phi)=A$, then the subset of primes $\fp\in \cP(\phi)$ such that $\fn$ divides $\chi(\cE_\fp/A[\pi_\fp])$ and, simultaneously,  
$\fm$ divides $\chi(\cO_\fp/\cE_\fp)$ has positive density.  
\end{enumerate}
\end{thm}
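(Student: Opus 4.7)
The approach is to translate each divisibility into a condition on the mod-$\fn\fm$ Galois representation $\rho_{\phi,\fn\fm}$ and then apply Chebotarev's density theorem.

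For Part (1) I would first establish the key criterion: if $\rho_{\phi,\fn}(\Frob_\fp)=\id$, then $\fn^{r-1}\mid \chi(\cE_\fp/A[\pi_\fp])$. The argument lives inside $\F_\fp\{\tau\}$: $\Frob_\fp$ acting as the identity on $\phi[\fn]=\ker\phi_\fn$ is equivalent to $\pi_\fp-1$ killing $\phi[\fn]$, hence to a factorization $\pi_\fp-1=\phi_\fn\cdot\beta$ in $\F_\fp\{\tau\}$. Since $\pi_\fp-1$ and $\phi_\fn$ both commute with $\phi$, so does $\beta$, giving $\beta\in\cE_\fp$ and $\pi_\fp\in A+\fn\cE_\fp$. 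Thus $A[\pi_\fp]\subseteq A+\fn\cE_\fp$, and the surjection $\cE_\fp/A[\pi_\fp]\twoheadrightarrow \cE_\fp/(A+\fn\cE_\fp)\cong (A/\fn)^{r-1}$ (using that $\cE_\fp/A$ is free of rank $r-1$, because $A$ is integrally closed in $F$) forces $\fn^{r-1}\mid\chi(\cE_\fp/A[\pi_\fp])$ at the level of Fitting ideals. Chebotarev applied to $F(\phi[\fn])/F$ then gives positive density $1/[F(\phi[\fn]):F]$ of such $\fp$, proving Part (1).

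For Part (2), the hypothesis $\End_{F^\alg}(\phi)=A$ enters through the open image theorem of Pink--R\"utsche, which guarantees that $\rho_{\phi,\fn\fm}(G_F)$ contains an open subgroup of $\GL_r(A/\fn\fm)$. I would then develop a parallel criterion for $\fm\mid\chi(\cO_\fp/\cE_\fp)$: for $\fq\mid\fm$, $\fq$ should divide this index precisely when the Tate module $T_\fq(\phi\otimes\F_\fp)$, viewed as a fractional $A_\fq[\pi_\fp]$-ideal inside $K\otimes F_\fq$, fails to be stable under $\cO_\fp\otimes A_\fq$, a condition readable from the Frobenius matrix modulo a sufficient power of $\fq$. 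Assuming $\gcd(\fn,\fm)=1$ (the general case reduces to this) and using CRT within the open image, one exhibits a conjugacy class $C$ in $\rho_{\phi,\fn\fm}(G_F)$ whose elements are $\equiv\id\pmod\fn$ (triggering the Part (1) mechanism) and, modulo each $\fq\mid\fm$, have the required non-stable Tate-module structure. Chebotarev on $C$ then yields the desired positive density.

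\textbf{Main obstacle.} The crux is the second criterion. Part (1) is tractable because $\cE_\fp$ is defined by commutation inside $\F_\fp\{\tau\}$, so $\phi_\fn$-divisibility directly produces elements of $\cE_\fp$. By contrast, $\cO_\fp$ is defined only abstractly inside $K$, and $\chi(\cO_\fp/\cE_\fp)$ is controlled by the fractional-ideal class of $T_\fq$ in $K\otimes F_\fq$. Identifying which Frobenius conjugacy classes realize the required non-stability, and verifying that such classes can be chosen simultaneously compatible with mod-$\fn$ triviality via the open image, is where the core technical difficulty is concentrated.
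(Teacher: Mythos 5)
Your Part (1) is correct and is essentially the paper's argument (Corollary \ref{cor3.2}): trivial action of $\Frob_\fp$ on $\phi[\fn]$ forces $A[\pi_\fp]\subseteq A+\fn\cE_\fp$ via division in $\F_\fp\{\tau\}$, and Chebotarev gives density $1/[F(\phi[\fn]):F]$. (One small slip: separability of $\phi_\fn(x)$ gives the factorization $\pi_\fp-1=\beta\,\phi_\fn$ with $\beta$ on the \emph{left} in the twisted polynomial ring; you wrote $\phi_\fn\cdot\beta$. It is harmless once $\beta$ is shown to commute with $\phi(A)$, but the commutation is deduced from the factorization, not assumed.)

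For Part (2) there is a genuine gap, and it sits exactly where you placed your ``main obstacle'': the entire technical content of the proof is the claim that divisibility of $\chi(\cO_\fp/\cE_\fp)$ by a power of $\fq$ is ``readable from the Frobenius matrix modulo a sufficient power of $\fq$,'' and this is asserted rather than proved. The correct local criterion is the one the paper uses: by Takahashi and Proposition \ref{prop1.8}, $\cE_\fp\otimes_A A_\fq$ is the centralizer $\cZ(\sigma_\fp)$ of $\sigma_\fp$ in $\End_{A_\fq}(T_\fq(\phi))$, so one must control the isomorphism class of this centralizer order. Two things then need to be established and are missing from your sketch. First, \emph{realizability}: one must exhibit elements of the (open) image $G_\fq\subset\Aut_{A_\fq}(T_\fq(\phi))$ whose centralizer is an order of index divisible by the prescribed power of $\fq$; the paper does this by starting from an arbitrary $A_\fq$-order $R_\fq$ in a separable commutative semisimple algebra $C_\fq$, identifying $R_\fq$ with the lattice $T_\fq(\phi)$, and producing a generator $u_0$ with $\cZ(u_0)=R_\fq$. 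Second, and crucially, \emph{openness}: the set $\{u\in G_\fq:\cZ(u)\cong\cZ(u_0)\}$ must be shown to be open, since Chebotarev only yields positive density for Frobenius conditions cut out by (conjugation-stable) open subsets of the image of Galois; it is \emph{not} a congruence condition modulo one fixed power of $\fq$ that can be read off naively. The paper proves this openness (Proposition \ref{keyProp}) by showing that the conjugation map $\Psi_u:(g,w)\mapsto gwg^{-1}$ from $\Aut_K(V)\times Z(u)^{\circ}$ is a submersion of $\fq$-adic manifolds (Lemma \ref{lem3.1}, a tangent-space computation using that $t$ has distinct eigenvalues), so that the conjugates of a small neighborhood of $u_0$ in its own centralizer sweep out an open set of elements all having conjugate, hence isomorphic, centralizer orders. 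Without these two steps your appeal to Chebotarev does not go through. The remaining combinatorial step (imposing $\equiv\id\pmod{\fq^{s}}$ at primes $\fq\mid\fn$ while prescribing centralizers at primes dividing $\fm$) is handled in the paper simply by intersecting each $G_\fq$ with a principal congruence subgroup before applying the openness result, so no reduction to the case $\gcd(\fn,\fm)=1$ is needed (and the reduction you assert is not obvious when a prime divides both $\fn$ and $\fm$).
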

We prove (1) as  Corollary \ref{cor3.2}, and the proof of (2) is given at the end of Section \ref{ss4.2}. 
In fact, we prove stronger results about the refined indices from which this theorem follows. 
Our proof is modeled on the proof of an analogous result for abelian varieties by Zarhin \cite{Zarhin}. 

Next, let $\fp\in \cP(\phi)$ and $(\fb_{\fp,1}, \dots, \fb_{\fp,r-1})$ be the refined index of $\cE_\fp/A[\pi_\fp]$. 
Let 
$$
P_\fp(X)=X^r+a_{\fp, 1}X^{r-1}+\cdots +a_{\fp, r-1}X+a_{\fp, r} \in A[X]
$$ the minimal polynomial of $\pi_\fp$ over $F$.
We show that $\fb_{\fp,1}$ and $a_{\fp,1}$ produce an interesting reciprocity law (see Section \ref{sRL}). 

\begin{thm}\label{thm12} Let $\phi$ be a Drinfeld $A$-module of rank $r\geq 2$ over $F$.  Let $0\neq \fn \lhd A$. 
Assume the characteristic $p$ of $F$ does not divide $r$ and the prime $\fp\in \cP(\phi)$ does not divide $\fn$. Then 
$\fp$ splits completely in the Galois extension $F(\phi[\fn])$ of $F$ if and only if 
$$
a_{\fp, 1}+r\equiv 0\ (\mod\ \fn)\quad \text{and}\quad \fb_{\fp, 1}\equiv 0\ (\mod\ \fn). 
$$
\end{thm}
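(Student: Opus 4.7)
The plan is to translate the Galois-theoretic condition ``$\fp$ splits completely in $F(\phi[\fn])$'' into the ring-theoretic divisibility $\pi_\fp-1\in\fn\cE_\fp$, and then to decode the latter using the reduced trace $\Tr_{K/F}(\pi_\fp)=-a_{\fp,1}$ together with the refined index $(\fb_{\fp,1},\dots,\fb_{\fp,r-1})$ of $\cE_\fp/A[\pi_\fp]$.

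By the standard compatibility of the mod-$\fn$ Galois representation on $\phi[\fn]$ with reduction at the good prime $\fp\nmid\fn$, this representation is unramified at $\fp$ and the Frobenius acts through $\pi_\fp$ on $M:=(\phi\otimes\F_\fp)[\fn]$; hence $\fp$ splits completely in $F(\phi[\fn])$ if and only if $\pi_\fp$ acts as the identity on $M$. I next claim this is equivalent to $\pi_\fp-1\in\fn\cE_\fp$. The ``$\Leftarrow$'' direction is immediate since $\fn$ annihilates $M$. For ``$\Rightarrow$'', the endomorphism $\pi_\fp-1$ vanishes on $\ker(\phi_\fn)$, so the standard isogeny factorization in the Ore domain $\F_\fp\{\tau\}$ produces $\sigma\in\F_\fp\{\tau\}$ with $\pi_\fp-1=\sigma\phi_\fn$; since $\pi_\fp-1$ commutes with $\phi_T$ and $\phi_\fn$ is not a zero-divisor, one checks $\sigma\phi_T=\phi_T\sigma$, so $\sigma\in\cE_\fp$, and using that $\phi_\fn$ is the image of $\fn\in A$ under $A\hookrightarrow\cE_\fp$ this reads $\pi_\fp-1=\fn\sigma\in\fn\cE_\fp$.

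The heart of the argument is to analyze the natural map $\mu:A[\pi_\fp]/\fn A[\pi_\fp]\to\cE_\fp/\fn\cE_\fp$ between free rank-$r$ modules over $A/\fn$. Tensoring the exact sequence $0\to A[\pi_\fp]\to\cE_\fp\to\bigoplus_{i=1}^{r-1}A/\fb_{\fp,i}\to 0$ with $A/\fn$ and using that $\mathrm{Tor}_1^A(A/\fb,A/\fn)\cong A/\gcd(\fn,\fb)$ over the PID $A$, I obtain
\[
\ker(\mu)\;\cong\;\coker(\mu)\;\cong\;\bigoplus_{i=1}^{r-1}A/\gcd(\fn,\fb_{\fp,i}).
\]
Because $\cE_\fp\subset\cO_\fp$ is integral over $A$ and $A$ is integrally closed in $F$, one has $\cE_\fp\cap F=A$, hence $A\cap\fn\cE_\fp=\fn A$, so $\bar 1\in\cE_\fp/\fn\cE_\fp$ has annihilator $\fn A$ and $(A/\fn)\cdot\bar 1$ is free of rank $1$. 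A cardinality count shows that $\im(\mu)=(A/\fn)\cdot\bar 1$ iff $|\coker(\mu)|=|A/\fn|^{r-1}$, iff $\gcd(\fn,\fb_{\fp,i})=\fn$ for every $i$, iff $\fn\mid\fb_{\fp,1}$ (using the chain $\fb_{\fp,1}\mid\cdots\mid\fb_{\fp,r-1}$). Under this condition $\bar\pi_\fp=c\cdot\bar 1$ for a unique $c\in A/\fn$, which I would pin down via the reduced trace $\Tr_{K/F}:\cE_\fp\to A$: it descends modulo $\fn$, and from $\Tr_{K/F}(\pi_\fp)=-a_{\fp,1}$ and $\Tr_{K/F}(1)=r$ one reads off $-a_{\fp,1}\equiv cr\pmod{\fn}$. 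The hypothesis $p\nmid r$ forces $r\in\F_q^\times\subset(A/\fn)^\times$, so $c\equiv -a_{\fp,1}/r\pmod{\fn}$ is well-defined, and $c\equiv 1\pmod{\fn}$ amounts to $a_{\fp,1}+r\equiv 0\pmod{\fn}$.

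Assembling everything, $\pi_\fp-1\in\fn\cE_\fp$ is equivalent to the conjunction of $\fn\mid\fb_{\fp,1}$ and $a_{\fp,1}+r\equiv 0\pmod{\fn}$, and combined with the initial Galois-theoretic reduction this is the stated criterion. The step I expect to be the most delicate is the identification of $\im(\mu)$ with the cyclic line $(A/\fn)\cdot\bar 1$ precisely when $\fn\mid\fb_{\fp,1}$: because $\fn$ need not be prime, $A/\fn$ is not a field, so one must argue with Fitting-ideal/cardinality data rather than with vector-space dimensions; the invertibility of $r$ modulo $\fn$ supplied by $p\nmid r$ then enters only at the final trace step, to uniquely determine the scalar $c$.
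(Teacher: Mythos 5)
Your proposal is correct and follows essentially the same route as the paper's proof: the identical Galois-theoretic reduction to ``$\pi_\fp$ acts as the identity on $\overline{\phi}[\fn]$'', the identical factorization lemma in $\F_\fp\{\tau\}$ showing that an endomorphism kills $\overline{\phi}[\fn]$ exactly when it lies in $\fn\cE_\fp$, and the identical translation of ``$\pi_\fp$ is a scalar modulo $\fn\cE_\fp$'' into the condition $\fn\mid\fb_{\fp,1}$, followed by pinning down the scalar using $p\nmid r$. The only (harmless) variations are that you verify the middle translation by a $\mathrm{Tor}$/cardinality count where the paper uses an adapted basis $A[\pi_\fp]=A+\fb_{\fp,1}e_1+\cdots+\fb_{\fp,r-1}e_{r-1}$ of $\cE_\fp$, and you identify the scalar $c$ via the trace rather than via the congruence $P_\fp(X)\equiv (X-c)^r\ (\mod\ \fn)$; both substitutions are sound.
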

Theorem \ref{thm12} for $r=2$ was proved in \cite{CP}. Here we give a somewhat different and simpler proof which 
works for any $r$. The restriction on $r$ being coprime to $p$ 
can be removed if $\fn$ is prime; see Remark \ref{rem3.2}. The primes which split completely in $F(\phi[\fn])$ have been 
 studied before, in particular in papers by Cojocaru and Shulman \cite{CS1}, \cite{CS2}, and Kuo and Liu \cite{KL}. 
 We also note that the argument of the proof 
 of Theorem \ref{thm12} can be adapted to the setting of elliptic curves over $\Q$ to give
a different (simpler) proof of \cite[Cor. 2.2]{DT} which does not rely on Deuring's Lifting Theorem. 

In Section \ref{sA}, we discuss how to compute in practice the endomorphism ring $\cE_\fp$ 
and the indices $\chi(\cE_\fp/A[\pi_\fp])$ and $\chi(\cO_\fp/\cE_\fp)$. 
The calculation of the characteristic polynomial of the Frobenius $P_\fp(X)$ is fairly straightforward, 
and easier than Schoof's algorithm \cite{SchoofEC} for elliptic curves over $\F_p$.  
We describe an algorithm for calculating $P_\fp(X)$ in polynomial time in $d$ and $r$. Then, assuming the rank of $\phi$ is $2$, 
we describe an algorithm for computing $\cE_\fp$. 
The algorithm actually computes all three rings $A[\pi_\fp]\subset \cE_\fp\subset \cO_\fp$, 
and the corresponding indices. In comparison to the known algorithms for computing 
the endomorphism rings of elliptic curves over finite fields, cf. \cite{DT}, \cite{Centeleghe}, our algorithm is quite 
different and more elementary. The difference stems from the fact that we crucially use the fact that 
$\End_{\F_\fp}(\phi)$ is a subring of the larger ambient space $\F_\fp\{\tau\}$. 
We have implemented our algorithms in \texttt{Magma}, and the examples 
presented in Section \ref{sA} are based on computer calculations. 

\section{Drinfeld modules over $\F_\fp$}\label{ssFDM}
In this section we collect some facts about Drinfeld modules over finite fields that are used throughout the paper. 

Let $\fp\lhd A$ be a prime of degree $d$. Let $\phi: A\to \F_\fp\{\tau\}$ be a Drinfeld $A$-module over $\F_\fp$ of rank $r$ 
determined by 
$$
\phi_T=\gamma(T)+g_1\tau+\cdots+g_{r-1}\tau^{r-1}+g_r\tau^r, \quad g_r\neq 0, 
$$
where $\gamma: A\to \F_\fp=A/\fp$ is the quotient map. 
It is easy to see that $\pi:=\tau^d$ is in the center of $\F_\fp\{\tau\}$, hence $\pi\in \End_{\F_\fp}(\phi)$. 
Denote by $P(X)\in A[X]$ the minimal polynomial of $\pi$ over $\phi(A)$. 

\begin{prop}\label{thm1} Let $K=F(\pi)\cong F[X]/(P(X))$. 
\begin{enumerate}
\item The field extension $K/F$ is imaginary of degree $r$.
\item $\End_{\F_\fp}(\phi)$ is an $A$-order in $K$. 
\end{enumerate}
\end{prop}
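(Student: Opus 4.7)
The plan is to study $\pi = \tau^d$ via its action on the $\fl$-adic Tate module of $\phi$ at a prime $\fl \neq \fp$, and to extract both $[K:F] = r$ and the $A$-order structure of $\cE_\fp$ from the resulting characteristic polynomial. The elementary starting observation is that $\pi = \tau^d$ lies in the center of $\F_\fp\{\tau\}$: it commutes with $\tau$ trivially, and with each $c \in \F_\fp = \F_{q^d}$ because $\tau^d c = c^{q^d}\tau^d = c\tau^d$. Hence $\pi \in \cE_\fp$ and $A[\pi] \subseteq \cE_\fp$ is a commutative $A$-subalgebra, so everything in the statement reduces to structural facts about $K = F(\pi)$ and the ambient division algebra $D := \cE_\fp \otimes_A F$.

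For (1), I fix a prime $\fl \neq \fp$ and consider $\chi(X) \in A_\fl[X]$, the characteristic polynomial of $\pi$ acting $A_\fl$-linearly on $T_\fl(\phi) \cong A_\fl^{\oplus r}$. Standard results on Drinfeld modules over finite fields (cf.\ \cite{GekelerTAMS}) give that $\chi$ is monic of degree $r$, has coefficients in $A$, is independent of $\fl$, and has constant term $\chi(0) = \pm \mu\fp$ for some unit $\mu \in \F_q^\times$; this last input comes from computing $\det(\pi \mid T_\fl)$ via the action of $\pi$ on the $\fp$-divisible / formal part of $\phi$ at $\fp$. Cayley--Hamilton combined with the injectivity $\cE_\fp \hookrightarrow \End_{A_\fl}(T_\fl(\phi))$ then yields $P(X) \mid \chi(X)$ in $A[X]$. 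To upgrade $P \mid \chi$ to $P = \chi$, I would invoke the structure theorem that $D$ is a central simple $K$-algebra and accordingly $\chi(X) = P(X)^h$ with $r = [K:F] \cdot h$. Comparing constant terms gives $P(0)^h = \pm \mu\fp$; since $\fp$ is a prime element of the PID $A$, unique factorization forces $h = 1$. Thus $\chi = P$, $[K:F] = r$, and $D = K$ simultaneously. For the imaginary claim I appeal to the Drinfeld ``Riemann hypothesis'': every root of $P$ in $F^\alg$ has $\infty$-adic absolute value $q^{d/r}$, so the Newton polygon of $P$ at $\infty$ is a single segment; combined with the irreducibility of $P$ over $F$, this forces $K \otimes_F F_\infty$ to be a field, i.e., $\infty$ has a unique place in $K$.

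Part (2) is then immediate. By the above, $\cE_\fp \otimes_A F = K$, so $\cE_\fp$ sits inside the field $K$ as a free $A$-submodule of finite rank containing $A$ (via $\phi$) and spanning $K$ over $F$ -- exactly the defining axioms (i)--(iii) of an $A$-order in $K$. The principal obstacle throughout is the identification $\chi = P$: it rests crucially on the nontrivial fact that $\chi(0)$ is, up to a unit in $\F_q^\times$, the prime element $\fp \in A$ itself, a Drinfeld-module-specific input requiring understanding of the action of $\pi$ on the $\fp$-adic formal structure of $\phi$. The imaginary property is a close second, needing both the Riemann hypothesis input and a Newton-polygon / ramification analysis at $\infty$ (with care required when $\gcd(d,r) > 1$).
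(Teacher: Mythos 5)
Your argument for the degree claim and for part (2) is essentially the paper's: both proofs rest on Gekeler's structure theorem that $\cE_\fp\otimes_A F$ is a central division algebra over $K$ of dimension $(r/\deg P)^2$ and that the full characteristic polynomial equals $P(X)^{h}$ with $(P(0)^{h})=\fp$, whence unique factorization in $A$ forces $h=1$. That part of your write-up is correct and needs no change.

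The one genuine gap is in your justification that $K/F$ is imaginary. A single-segment Newton polygon of slope $-d/r$ at $\infty$, even combined with irreducibility of $P$ over $F$, does \emph{not} force $K\otimes_F F_\infty$ to be a field once $\gcd(d,r)>1$: for example, with $q$ odd, $X^2-(T^2+T)$ is irreducible over $F$, its roots have $\infty$-adic valuation $-1$ (single slope), yet it splits over $F_\infty=\F_q(\!(1/T)\!)$ because $1+1/T$ is a square there by Hensel's lemma. So the step ``this forces $K\otimes_F F_\infty$ to be a field'' is false as a general implication, and your parenthetical ``care required when $\gcd(d,r)>1$'' is flagging exactly the point where the proof is missing. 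The Newton polygon only shows that every irreducible factor of $P$ over $F_\infty$ has degree divisible by $r/\gcd(d,r)$, which suffices only when $\gcd(d,r)=1$. The correct input (and what Gekeler's (2.3), cited by the paper, amounts to) is Drinfeld's theorem, quoted in the introduction of this paper, that $\End_L(\phi)\otimes_A F_\infty$ is a \emph{division algebra} over $F_\infty$: since $K\otimes_F F_\infty$ embeds into it as a finite-dimensional commutative subalgebra without zero divisors, it is a field, i.e., there is a unique place of $K$ above $\infty$. Replacing your Newton-polygon step by this division-algebra argument closes the gap.
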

\begin{proof} Let $r_1$ be the degree of $P(X)$. 
By Theorem 2.9 in \cite{GekelerFDM}, $r_1$ divides $r$. Let $r_2=r/r_1$. 
Let $P_1(X)=P(X)^{r_2}$. By Lemma 3.3 and Theorem 5.1 (ii) in \cite{GekelerFDM}, we 
have $(P_1(0))=\fp$. Thus, if $c=P(0)$ is the constant term of $P(X)$, then $c^{r_2}$, up to $\F_q^\times$ multiple, 
is equal to the irreducible monic polynomial $\fp$. This implies $r_2=1$, or equivalently $r=r_1$. 
By \cite[(2.3)]{GekelerFDM}, $K/F$ is imaginary. This proves (1). 

By Theorem 2.9 in \cite{GekelerFDM}, $\End_{\F_\fp}(\phi)\otimes_A F$ is a central division algebra 
over $K$ of dimension $r_2^2=1$. Thus, $\End_{\F_\fp}(\phi)\otimes_A F = K$. This proves (2). 
\end{proof}

The previous proposition implies that $P(X)$ has degree $r$. Write 
$$
P(X)=X^r+a_1 X^{r-1}+\cdots+a_r . 
$$
\begin{prop}\label{prop1.3}  For $1\leq i\leq r$, we have $$\deg(a_i)\leq \frac{i\cdot  d}{r}.$$ 
\end{prop}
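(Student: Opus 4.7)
The plan is to prove the bound via a Newton polygon argument for $P(X)$ at the place $\infty$, using the hypothesis that $K/F$ is imaginary. I normalize the valuation so that $v_\infty(a) = -\deg(a)$ for $a \in A$; the claim $\deg(a_i) \leq id/r$ is then equivalent to $v_\infty(a_i) \geq -id/r$ for each $i$.

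First, I would observe that because $K/F$ is imaginary by Proposition \ref{thm1}(1), there is a unique place of $K$ above $\infty$, so $K_\infty := K \otimes_F F_\infty$ is a field of degree $r$ over $F_\infty$. Consequently $P(X)$, being the minimal polynomial of $\pi$ over $F$ of degree $r = [K_\infty : F_\infty]$, remains irreducible over $F_\infty$. Since $F_\infty$ is complete with respect to a discrete valuation, any polynomial irreducible over $F_\infty$ has all of its roots in $F_\infty^\alg$ sharing a common valuation under the unique extension of $v_\infty$ — a standard consequence of the Newton polygon being a single segment for irreducible polynomials over complete non-Archimedean fields. Denote this common valuation by $s$.

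Next, I would pin down $s$ using the constant term $a_r$. The proof of Proposition \ref{thm1} already records the identity $(P(0)) = \fp$ in $A$, which gives $\deg(a_r) = \deg(\fp) = d$, i.e., $v_\infty(a_r) = -d$. Writing $P(X) = \prod_{j=1}^r (X - \pi_j)$ in $F_\infty^\alg$, one has $a_r = (-1)^r \pi_1 \cdots \pi_r$, so $rs = v_\infty(a_r) = -d$ and hence $s = -d/r$.

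Finally, I would expand each coefficient as an elementary symmetric polynomial, $a_i = (-1)^i \sum_{j_1 < \cdots < j_i} \pi_{j_1} \cdots \pi_{j_i}$, a sum of $\binom{r}{i}$ monomials, each of $v_\infty$-valuation exactly $is$. The ultrametric inequality then yields $v_\infty(a_i) \geq is = -id/r$, i.e., $\deg(a_i) \leq id/r$, as required. The only step that needs any care is the first — invoking that $P$ remains irreducible over $F_\infty$ — but this is immediate from the imaginary hypothesis, so I expect no real obstacle.
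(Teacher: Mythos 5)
Your argument is correct, but it is worth noting that the paper itself does not argue at all: it simply cites \cite[Thm.~1~(f)]{JKYu}, which states precisely the fact you derive, namely that every root $\pi_j$ of $P$ satisfies $|\pi_j|_\infty=|\fp|^{1/r}$. Your Newton polygon argument is, in effect, a self-contained proof of that cited input: the imaginary hypothesis forces $P$ to have a single irreducible factor over $F_\infty$ (up to multiplicity), hence a one-segment Newton polygon and a common root valuation $s$; the identity $(P(0))=\fp$ from the proof of Proposition \ref{thm1} (or Proposition \ref{thm4}) pins down $s=-d/r$; and the elementary symmetric function expansion with the ultrametric inequality gives $\deg(a_i)\le id/r$. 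The one imprecision is your claim that $K\otimes_F F_\infty$ is a field and that $P$ stays irreducible over $F_\infty$: when $K/F$ is inseparable (which can occur here, e.g.\ $P(X)=X^r-\eps\fp$ in characteristic dividing $r$), ``unique place above $\infty$'' only guarantees $P=Q^e$ for a single irreducible $Q$ over $F_\infty$, and the tensor product may have nilpotents. This does not damage the proof, since the roots of $P$ are exactly the roots of $Q$ and therefore still share a common valuation, but you should phrase the first step in terms of $P$ having a unique irreducible factor over $F_\infty$ rather than being irreducible. With that adjustment your proof is a clean, elementary replacement for the reference.
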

\begin{proof}
This follows from \cite[Thm. 1 (f)]{JKYu}.
\end{proof}

\begin{prop}\label{thm4} 
Let 
$$
\eps(\phi):=(-1)^r (-1)^{d(r+1)}\Nr_{\F_\fp/\F_q}(g_r)^{-1} \in \F_q^\times. 
$$
Then 
$$
a_r=\eps(\phi)\fp. 
$$
\end{prop}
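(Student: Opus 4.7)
By Proposition~\ref{thm1} we have $(a_r) = \fp$, so $a_r = c\,\fp$ for a unique $c \in \F_q^\times$, and the task is to show $c = \epsilon(\phi)$. The plan is to identify $a_r$ with $(-1)^r$ times the determinant of Frobenius on the $\fl$-adic Tate module of $\phi$, and then to reduce the computation to a rank-one Drinfeld module via a determinant construction.

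Since $K = F(\pi)$ has degree $r$ over $F$, the polynomial $P(X)$ is simultaneously the minimal and characteristic polynomial of $\pi$ over $F$, and it coincides with the characteristic polynomial of $\pi$ acting on the free rank-$r$ module $T_\fl(\phi)$ for any prime $\fl \neq \fp$. Hence $a_r = P(0) = (-1)^r \det_{A_\fl}(\pi\mid T_\fl(\phi))$. To compute this determinant I would use the canonical $A_\fl$-linear isomorphism
\[\wedge^r_{A_\fl} T_\fl(\phi) \;\cong\; T_\fl(\det\phi),\]
where $\det\phi$ is the rank-one Drinfeld $A$-module over $\F_\fp$ defined by $(\det\phi)_T = \gamma(T) + (-1)^{r-1} g_r\,\tau$; under this isomorphism, the action of $\det(\pi\mid T_\fl(\phi))$ on the left corresponds to the action of $\pi$ on $T_\fl(\det\phi)$.

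The rank-one case is elementary. For $\psi_T = \gamma(T) + h\tau$ over $\F_\fp$, one has $\pi = \psi_b$ for a unique $b \in A$ of degree $d$, and comparing leading $\tau$-coefficients of $\tau^d$ and $\psi_b$ via the identity $\mathrm{lt}_\tau(\psi_b) = (\text{leading coefficient of }b)\cdot h^{1+q+\cdots+q^{d-1}}$ yields $b = \Nr_{\F_\fp/\F_q}(h)^{-1}\fp$. Substituting $h = (-1)^{r-1} g_r$, combining with the $(-1)^r$ factor, and tracking signs via $\Nr_{\F_\fp/\F_q}(-1) = (-1)^d$ for $q$ odd (trivial when $\mathrm{char}\,F = 2$) together with the parity identity $(r-1)d \equiv (r+1)d \pmod 2$ then produces the stated formula $a_r = \epsilon(\phi)\,\fp$.

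The main obstacle will be establishing the determinant Drinfeld module $\det\phi$ together with the isomorphism $\wedge^r T_\fl(\phi)\cong T_\fl(\det\phi)$, and in particular justifying the sign $(-1)^{r-1}$ in the formula for $(\det\phi)_T$; this is a standard consequence of Anderson's $t$-motive formalism, but the sign must be tracked carefully through the top-exterior-power construction. Once it is in place, the rank-one reduction and the final parity bookkeeping are routine.
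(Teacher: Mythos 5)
Your plan is correct, and the sign bookkeeping does check out: with $(\det\phi)_T=\gamma(T)+(-1)^{r-1}g_r\tau$ one gets $\det(\pi\mid T_\fl(\phi))=\Nr_{\F_\fp/\F_q}((-1)^{r-1}g_r)^{-1}\fp=(-1)^{d(r-1)}\Nr_{\F_\fp/\F_q}(g_r)^{-1}\fp$, and multiplying by $(-1)^r$ and using $d(r-1)\equiv d(r+1)\pmod 2$ gives exactly $\eps(\phi)\fp$. For comparison, the paper offers no argument at all here: it simply cites Hsia--Yu \cite{HYu}, p.~268, where this formula is established. Your proposal essentially reconstructs the proof behind that citation, and you have correctly isolated the one genuinely nontrivial ingredient, namely the existence of the rank-one ``determinant'' Drinfeld module with leading coefficient $(-1)^{r-1}g_r$ together with a Frobenius-equivariant isomorphism $\wedge^r_{A_\fl}T_\fl(\phi)\cong T_\fl(\det\phi)$. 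This is a real theorem (announced by Anderson, proved via the top exterior power of the associated $t$-motive, and it is precisely what Hsia--Yu use), not something you can wave through, so a complete write-up would either have to cite it or prove it --- at which point you are in roughly the same position as the authors. The remaining steps (that $P(X)$ is the characteristic polynomial of $\pi$ on $T_\fl(\phi)$, the rank-one leading-coefficient computation giving $\pi=\psi_b$ with $b=\Nr_{\F_\fp/\F_q}(h)^{-1}\fp$, and $\Nr_{\F_\fp/\F_q}(-1)=(-1)^d$) are all correct and routine.
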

\begin{proof}
This follows from \cite[p. 268]{HYu}. 
\end{proof}

We can consider $\F_\fp$ as an $A$-module via $\phi$; this module will be denoted ${^\phi}\F_\fp$. Then 
\begin{equation}\label{eqdecomposition}
{^\phi}\F_\fp\cong A/\fd_1\times \cdots \times A/\fd_r,
\end{equation}
for uniquely determined non-zero ideals $\fd_1, \dots, \fd_r\lhd A$ such that 
$\fd_1\mid \fd_2\mid \cdots\mid \fd_r$. There are at most $r$ terms because ${^\phi}\F_\fp$ 
is a finite $A$-module so for some $\fd\lhd A$ we have ${^\phi}\F_\fp\subset \phi[\fd]\cong (A/\fd)^r$. 
Denote $\chi(\phi):=\fd_1\cdots \fd_r$. It is clear that $\deg\chi(\phi)=\deg\fp$. 
It is also easy to see the following:
\begin{lem}\label{lem1.5} $\fd_1\in A$ is the monic polynomial of largest degree such that  
	$\fp$ does not divide $\fd_1$ and all the roots of $\phi_{\fd_1}(x)$ are in $\F_\fp$. 
 \end{lem}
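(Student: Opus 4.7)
The plan is to translate the condition ``all roots of $\phi_{\fd_1}(x)$ lie in $\F_\fp$'' into a purely $A$-module-theoretic condition on ${^\phi}\F_\fp$ and then read the answer off from the invariant-factor decomposition \eqref{eqdecomposition}.

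First, for any monic $a\in A$ coprime to $\fp$, the polynomial $\phi_a(x)$ is separable (since $\phi'_a(x)=\gamma(a)\neq 0$), and its roots form the $A$-module $\phi[a]\cong (A/aA)^{\oplus r}$ as recalled in \eqref{eqPola}. Hence the condition ``all roots of $\phi_a$ lie in $\F_\fp$'' is equivalent to $\phi[a]\subseteq {^\phi}\F_\fp$. A direct application of the structure theorem for modules over the PID $A$ shows that the $a$-torsion of $A/\fd_i$ is cyclic, isomorphic to $A/\gcd(a,\fd_i)$, so the $a$-torsion of ${^\phi}\F_\fp$ is $\bigoplus_{i=1}^r A/\gcd(a,\fd_i)$. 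Comparing invariant factors with those of $(A/aA)^{\oplus r}$, the required embedding exists iff $a\mid \gcd(a,\fd_i)$ for each $i$, iff $a\mid \fd_i$ for each $i$, iff (using $\fd_1\mid \fd_i$) $a\mid \fd_1$.

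Second, I would verify that $\fd_1$ is itself coprime to $\fp$. Since $\ker(\gamma)=\fp$, the Drinfeld module $\phi$ is of ``$\fp$-characteristic,'' and the standard height theory (e.g.\ \cite{GekelerFDM}) gives $\phi[\fp](\F_\fp^\alg)\cong (A/\fp)^{\oplus(r-h)}$ for some height $1\leq h\leq r$. If $\fp\mid \fd_1$, then $\fp\mid \fd_i$ for every $i$, so the computation of the previous paragraph (with $a=\fp$) shows the $\fp$-torsion of ${^\phi}\F_\fp$ is isomorphic to $(A/\fp)^{\oplus r}$; but this is an $A$-submodule of $(A/\fp)^{\oplus(r-h)}$, forcing $r\leq r-h$, contradicting $h\geq 1$. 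Hence $\fp\nmid \fd_1$.

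Combining the two steps, the monic polynomials $a\in A$ coprime to $\fp$ with all roots of $\phi_a$ in $\F_\fp$ are exactly the monic divisors of $\fd_1$; since $\fd_1$ itself satisfies both conditions, it is the one of largest degree, as claimed. The one delicate point in the argument is the coprimality $\gcd(\fd_1,\fp)=1$, where I rely on the positivity of the height in finite characteristic to rule out that $\fp$ could divide the smallest invariant factor.
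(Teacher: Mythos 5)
Your proof is correct; the paper states this lemma without proof (``it is also easy to see''), and your argument --- identifying the $a$-torsion of ${^\phi}\F_\fp$ with $\bigoplus_{i}A/\gcd(a,\fd_i)$, deducing that $\phi[a]\subseteq\F_\fp$ iff $a\mid\fd_1$, and using positivity of the height to rule out $\fp\mid\fd_1$ --- is exactly the intended one. (For $r\geq 2$ the coprimality $\fp\nmid\fd_1$ also follows more cheaply from the count $\sum_{i}\deg\fd_i=\deg\fp$, since $\fp\mid\fd_1$ would force $\sum_i\deg\fd_i\geq r\deg\fp$; but your height argument has the advantage of working for all $r\geq 1$.)
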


\begin{prop}\label{prop1.7} We have 
$$
\chi(\phi)=P(1)A=(1+a_1+a_2+\cdots + a_r)A.
$$
\end{prop}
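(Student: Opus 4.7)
The plan is to establish $\chi(\phi)=P(1)A$ in three moves: first, show that $P(1)$ annihilates ${^\phi}\F_\fp$; second, verify that $\chi(\phi)$ and $P(1)A$ have the same degree; and third, match the two ideals locally at every prime $\ell\neq\fp$ via Tate modules.

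For the annihilation step, I would work in $\F_\fp\{\tau\}$. Because $\pi=\tau^d$ is central, the relation $P(\pi)=0$ in $\cE_\fp$ reads
\[
\pi^r+\phi_{a_1}\pi^{r-1}+\cdots+\phi_{a_r}=0
\]
in $\F_\fp\{\tau\}$. Using the identity $1-\pi^k=(1-\pi)(1+\pi+\cdots+\pi^{k-1})$, I can rearrange this as
\[
\phi_{P(1)}=1+\phi_{a_1}+\cdots+\phi_{a_r}=(1-\pi)\,\nu
\]
for an explicit $\nu\in\F_\fp\{\tau\}$. Since $(1-\pi)(x)=x-x^{q^d}$ vanishes on $\F_\fp$, so does $\phi_{P(1)}$, giving $P(1)\in\fd_r A=\mathrm{Ann}_A({^\phi}\F_\fp)$. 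Moreover, $\deg\chi(\phi)=\log_q|{^\phi}\F_\fp|=d$, and by Propositions~\ref{prop1.3} and~\ref{thm4} we have $\deg a_r=d$ while $\deg a_i\leq id/r<d$ for $i<r$, so the leading term of $P(1)$ is that of $a_r$ and $\deg P(1)=d$ as well.

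To pin down the ideals exactly, I would compare them $\ell$-locally for primes $\ell\neq\fp$ using the short exact sequence of $A$-modules (via $\phi$)
\[
0\to{^\phi}\F_\fp\to\F_\fp^\alg\xrightarrow{1-\pi}\F_\fp^\alg\to0,
\]
whose surjectivity follows from the separability of $x-x^{q^d}$. Passing to inverse limits of $\ell^n$-torsion and using $\ell$-divisibility of $\F_\fp^\alg$ (which holds since $\ell\neq\fp$) yields
\[
0\to T_\ell(\phi)\xrightarrow{\pi-1}T_\ell(\phi)\to{^\phi}\F_\fp\otimes_A A_\ell\to0.
\]
Since the characteristic polynomial of $\pi$ acting on the free rank-$r$ $A_\ell$-module $T_\ell(\phi)$ is $P(X)$, we obtain $\det(\pi-1\mid T_\ell\phi)=(-1)^rP(1)$, so the Fitting ideal of the cokernel equals $P(1)A_\ell$. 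Thus $\chi(\phi)$ and $P(1)A$ agree at every prime $\ell\neq\fp$; combined with the equal degrees, this forces them to agree at $\fp$ as well, giving $\chi(\phi)=P(1)A$.

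The hardest step is the identification of the characteristic polynomial of $\pi$ on $T_\ell(\phi)$ with $P(X)$. This is standard for Drinfeld modules over finite fields but requires an external citation to \cite{GekelerFDM}; alternatively, one may use Proposition~\ref{thm1} to see that $V_\ell(\phi)=T_\ell(\phi)\otimes_{A_\ell}F_\ell$ is free of rank one over $K\otimes_F F_\ell$, whence multiplication by $\pi$ has $F_\ell$-characteristic polynomial equal to its norm polynomial, namely $P(X)$.
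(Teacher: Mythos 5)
Your proof is correct, but it is worth knowing that the paper does not actually argue this proposition: its ``proof'' is a bare citation of \cite[Thm.~5.1(i)]{GekelerFDM}. What you have written is essentially a reconstruction of Gekeler's argument, the Drinfeld-module analogue of $\#E(\F_p)=\det(1-\Frob\mid T_\fl E)$, so relative to the paper you supply genuine content. Three remarks. First, your opening step is logically redundant: the conclusion $P(1)\in\fd_rA$ is never used, since the $\fl$-local comparison for all $\fl\neq\fp$ together with $\deg P(1)=d=\deg\chi(\phi)$ already forces the two ideals to agree at $\fp$ as well. (The step is still a valid sanity check; the factorization $\phi_{P(1)}=(1-\pi)\nu$ works because $\pi$ is central, $\nu\in\F_\fp\{\tau\}$ preserves $\F_\fp$, and $1-\pi$ kills $\F_\fp$.) Second, the ``hardest step'' you flag needs no external citation: the identification of the characteristic polynomial of $\pi$ on $T_\fl(\phi)$ with $P(X)$ is exactly Proposition~\ref{prop1.8}(1), since $\pi=\tau^d$ acts on torsion points as $\Frob_\fp$. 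Third, the passage from the sequence $0\to{^\phi}\F_\fp\to\F_\fp^\alg\xrightarrow{1-\pi}\F_\fp^\alg\to0$ to the Tate-module sequence deserves one more line than ``passing to inverse limits'': one should note that ${^\phi}(\F_\fp^\alg)$ is a torsion $A$-module which is $\fl$-divisible for $\fl\neq\fp$, restrict to $\fl$-primary components (where surjectivity of $1-\pi$ is inherited from the direct-sum decomposition into primary parts), and then apply $\Hom_{A_\fl}(F_\fl/A_\fl,-)$, using that $\Ext^1$ of $F_\fl/A_\fl$ against the divisible module $\phi[\fl^\infty]$ vanishes while against a finite module it returns its $\fl$-part; alternatively, take $\varprojlim$ over the $\fl^n$-torsion snake sequences, which stays exact because all terms are finite. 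With these points filled in, the argument is complete and correct.
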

\begin{proof}
See \cite[Thm. 5.1 (i)]{GekelerFDM}. 
\end{proof}

\begin{prop}\label{prop1.8} Let $\Frob_\fp\in G_{\F_\fp}$ be the Frobenius automorphism $\alpha\mapsto \alpha^{q^d}$. 
Let $\fl\lhd A$ be a prime different from $\fp$. 
\begin{enumerate}
\item The characteristic polynomial of $\rho_{\phi,\fl}(\Frob_\fp)$ is $P(X)$; in particular, the characteristic polynomial 
of $\rho_{\phi,\fl}(\Frob_\fp)$ has coefficients in $A$ and does not depend on $\fl$. 
\item The natural map 
$$
\End_{\F_\fp}(\phi)\otimes_A A_\fl\to \End_{A_\fl[G_{\F_\fp}]}(T_\fl(\phi))
$$
is an isomorphism. 
\end{enumerate}
\end{prop}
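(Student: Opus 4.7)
The plan is to prove (1) first and then extract (2) from a commutant calculation together with Drinfeld's isogeny theorem for Drinfeld modules over finite fields.

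Setting $\pi := \Frob_\fp = \tau^d$, I would begin by endowing $V_\fl(\phi) := T_\fl(\phi)\otimes_{A_\fl} F_\fl$ with a module structure over $K\otimes_F F_\fl$. Proposition~\ref{thm1} identifies the minimal polynomial $P(X)\in A[X]$ of $\pi$ over $F$ as one of degree $r$, and $P(\pi) = 0$ in $\End_{\F_\fp}(\phi)$, so the same relation holds on $V_\fl(\phi)$. Thus the action of $\pi$ extends to an action of $F_\fl[X]/(P(X)) \cong K\otimes_F F_\fl \cong \prod_{\mathfrak{L}\mid\fl} K_{\mathfrak{L}}$, the product running over primes $\mathfrak{L}$ of $K$ above $\fl$, and accordingly $V_\fl(\phi) = \bigoplus_{\mathfrak{L}} V_{\mathfrak{L}}$ with each $V_{\mathfrak{L}}$ a $K_{\mathfrak{L}}$-vector space.

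The crux of (1) is to show $\dim_{K_{\mathfrak{L}}} V_{\mathfrak{L}} = 1$ for every $\mathfrak{L}$. Two identities do the bookkeeping: $\dim_{F_\fl}V_\fl(\phi) = r = \sum_{\mathfrak{L}}[K_{\mathfrak{L}}:F_\fl]\cdot\dim_{K_{\mathfrak{L}}}V_{\mathfrak{L}}$ and $r = [K:F] = \sum_{\mathfrak{L}}[K_{\mathfrak{L}}:F_\fl]$, which together force $\sum_{\mathfrak{L}}[K_{\mathfrak{L}}:F_\fl](\dim_{K_{\mathfrak{L}}}V_{\mathfrak{L}} - 1) = 0$. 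Provided every $V_{\mathfrak{L}}$ is nonzero, all dimensions must equal one. To establish this non-vanishing I would show that $K\otimes_F F_\fl$ acts \emph{faithfully} on $V_\fl(\phi)$, which reduces to the injectivity of $\End_{\F_\fp}(\phi)\otimes_A A_\fl \to \End_{A_\fl}(T_\fl(\phi))$. That injectivity follows because any nonzero $u\in\F_\fp\{\tau\}$ is a nonzero additive polynomial with finite zero set in $\F_\fp^\alg$, whereas the $\fl^\infty$-torsion of $\phi$ is infinite (here one uses $\fl\neq\fp=\ker\gamma$, so $\phi[\fl^n]\cong(A/\fl^n)^{\oplus r}$). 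With faithfulness in hand, the characteristic polynomial of $\pi$ on $V_\fl(\phi)$ equals $\prod_{\mathfrak{L}}P_{\mathfrak{L}}(X) = P(X)$, where $P_{\mathfrak{L}}$ is the local factor of $P$ at $\mathfrak{L}$, giving (1).

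For (2), the above analysis shows $V_\fl(\phi)$ is a free $K\otimes_F F_\fl$-module of rank one, so its $(K\otimes_F F_\fl)$-linear endomorphisms are just $K\otimes_F F_\fl$ itself. Since $G_{\F_\fp}$ is topologically generated by $\Frob_\fp = \pi$ and $F_\fl[\pi] = K\otimes_F F_\fl$, the ring of $G_{\F_\fp}$-equivariant $F_\fl$-linear endomorphisms of $V_\fl(\phi)$ coincides with $K\otimes_F F_\fl$. Consequently $\End_{A_\fl[G_{\F_\fp}]}(T_\fl(\phi))$ is an $A_\fl$-order in $K\otimes_F F_\fl$, as is the image of $\cE_\fp\otimes_A A_\fl$, and both span the same $F_\fl$-algebra. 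Promoting this rational equality to an integral equality of orders is done by invoking the Drinfeld-module analogue of Tate's isogeny theorem over finite fields (\cite{Drinfeld2}; cf.\ \cite[Thm.~2.9]{GekelerFDM}). The principal obstacle is precisely the non-vanishing of each $V_{\mathfrak{L}}$: a priori $V_\fl(\phi)$ only carries an $F_\fl[\pi]$-structure, and if $P$ factors nontrivially over $F_\fl$ one cannot rule out $\pi$ acting through a proper divisor of $P$ without exploiting faithfulness of $T_\fl(\phi)$ as an $\End_{\F_\fp}(\phi)$-module—a genuine input specific to Drinfeld modules away from their characteristic. Elevating the $F_\fl$-algebra equality in (2) to an $A_\fl$-order equality is the other substantive ingredient and appears to require the full strength of the isogeny theorem rather than any purely formal commutant argument.
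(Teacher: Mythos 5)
The paper offers no argument for this proposition at all --- it is proved by citation to \cite[\S 3]{GekelerFDM} and \cite[Thm.~2]{JKYu} --- so any honest attempt is necessarily ``a different route.'' Your treatment of (1) is correct and is essentially the standard argument from those references: the decisive input is the injectivity of $A[\pi]\otimes_A A_\fl\to\End_{A_\fl}(T_\fl(\phi))$ (your finite-kernel versus infinite $\fl^\infty$-torsion argument, plus the usual approximation to pass from $A$- to $A_\fl$-coefficients), which forces the minimal polynomial of $\pi$ on $V_\fl(\phi)$ to be all of $P(X)$; since $\deg P=r=\dim_{F_\fl}V_\fl(\phi)$, the characteristic polynomial equals $P$ already by degree count, so you could bypass the decomposition $K\otimes_F F_\fl\cong\prod_{\mathfrak{L}}K_{\mathfrak{L}}$ entirely. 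That decomposition does hold here, but note it is not automatic when $K/F$ is inseparable (a case that genuinely occurs for Frobenius fields, cf.\ Case~2 of Section~\ref{ssER}); for global function fields it survives by excellence, but the degree-count shortcut avoids the issue. Your rank-one/commutant computation then correctly yields the \emph{rational} form of (2): $\cE_\fp\otimes_A F_\fl\xrightarrow{\ \sim\ }\End_{F_\fl[G_{\F_\fp}]}(V_\fl(\phi))=F_\fl[\pi]$.

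The one place your argument thins out is exactly where you say it does: passing from the rational to the integral statement. But the ``isogeny theorem'' you invoke for this step, in the integral form you need, \emph{is} Proposition~\ref{prop1.8}(2) itself, and the references you point to (\cite{Drinfeld2}, \cite[Thm.~2.9]{GekelerFDM}) only give the division-algebra structure of $\End\otimes_A F$, i.e.\ the rational statement you have already proved. So as written this step is circular. Fortunately it can be closed by an elementary argument in the spirit of the paper's proof of Theorem~\ref{thm12}: given $u\in\End_{A_\fl[G_{\F_\fp}]}(T_\fl(\phi))\subset\cE_\fp\otimes_A F_\fl$, write $u=\sum e_i\otimes c_i$ with $\{e_i\}$ an $A$-basis of $\cE_\fp$, pick $n$ with $\fl^n c_i\in A_\fl$, and choose $a_i\in A$ with $a_i\equiv\fl^n c_i\pmod{\fl^{2n}}$. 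Then $w=\sum a_i e_i\in\cE_\fp$ maps $T_\fl(\phi)$ into $\fl^nT_\fl(\phi)$, hence kills $\phi[\fl^n]$, hence $w=v\,\phi_{\fl^n}$ with $v\in\cE_\fp$ by the separability/division argument used in the proof of Theorem~\ref{thm12}. This gives $u\in\cE_\fp\otimes_AA_\fl+\fl^n\End_{A_\fl[G_{\F_\fp}]}(T_\fl(\phi))$ for every $n$, and since $\cE_\fp\otimes_AA_\fl$ is $\fl$-adically closed, $u\in\cE_\fp\otimes_AA_\fl$. With that paragraph added (or with the citation corrected to \cite[\S 3]{GekelerFDM} or \cite[Thm.~2]{JKYu}), your proof is complete and considerably more informative than the paper's.
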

\begin{proof}
See \cite[$\S$3]{GekelerFDM} or \cite[Thm. 2]{JKYu}. 
\end{proof}

\section{Reciprocity law: Proof of Theorem \ref{thm12}}\label{sRL} 
Let $\phi$ be a Drinfeld $A$-module over $F$ of rank $r\geq 2$. 
For $\fn\lhd A$, let $F(\phi[\fn])$ be the splitting field of the polynomial $\phi_\fn(x)$ in \eqref{eqPola}. 
This is a Galois extension whose Galois group is naturally a subgroup of $\GL_r(A/\fn)$. A prime $\fp\lhd A$
is unramified in $F(\phi[\fn])/F$ if $\fp\in \cP(\phi)$ and $\fp$ 
does not divide $\fn$; cf. \cite{Takahashi}. Theorem \ref{thm12} 
describes those primes which split completely in $F(\phi[\fn])$ in terms of congruences modulo $\fn$; such theorems 
are usually called ``reciprocity laws''. Recall that the set of all but finitely many primes which split completely in a 
given Galois extension uniquely determines that extension. In the following proof we keep the notion introduced right before 
Theorem \ref{thm12}. 

\begin{proof}[Proof of Theorem \ref{thm12}] 
	To simplify the notation, denote $\overline{\phi}=\phi\otimes\F_\fp$. 
The prime $\fp$ is unramified in $F(\phi[\fn])$.  
Reducing $\phi_\fn(x)$ modulo $\fp$ we get a canonical 
isomorphism $\phi[\fn]\cong \overline{\phi}[\fn]$ of $A$-modules. The prime $\fp$ splits 
completely in $F(\phi[\fn])$ if and only if $\Frob_\fp\in G_{\F_\fp}$ acts as the identity on $\overline{\phi}[\fn]$. 
On the other hand, the action of $\Frob_\fp$ on $\overline{\phi}[\fn]$ agrees with the action of 
$\pi_\fp$ as an endomorphism of $\overline{\phi}$. Thus, we need to show that $\pi_\fp$ acts as the 
identity on $\overline{\phi}[\fn]$ if and only if the congruences of the theorem hold. 

First, we prove that $w\in \cE_\fp$ acts as $0$ on $\overline{\phi}[\fn]$ 
if and only if $w\in \fn\cE_\fp$. If $w\in \fn\cE_\fp$ then $w=v\overline{\phi}_\fn$ for some $v\in \cE_\fp$, so 
it obviously acts as $0$ on $\overline{\phi}[\fn]$. Conversely, suppose $w$ acts as $0$ on $\overline{\phi}[\fn]$. By the 
division algorithm in $\F_\fp\{\tau\}$, we can write $w=v\overline{\phi}_\fn+u$ for some $v, u\in \F_\fp\{\tau\}$  with $u=0$ or $\deg_\tau(u)<\deg_\tau(\overline{\phi}_\fn)$. 
Since $w$ and $\overline{\phi}_\fn$ act as $0$ on $\overline{\phi}[\fn]$, so does $u$. On the other hand, the polynomial 
$\overline{\phi}_\fn(x)$ is separable. This implies $\deg_\tau(u)\geq \deg_\tau(\overline{\phi}_\fn)$ or $u=0$. Thus, $u=0$ and $w=v\overline{\phi}_\fn$. 
We need to show that $v\in \cE_\fp$, i.e., $v$ commutes with $\overline{\phi}(A)$. 
Now $w\overline{\phi}_b=v\overline{\phi}_\fn\overline{\phi}_b=v\overline{\phi}_b\overline{\phi}_\fn$ and 
$w\overline{\phi}_b=\overline{\phi}_bw=\overline{\phi}_b v \overline{\phi}_\fn$. Thus, 
$(v\overline{\phi}_b-\overline{\phi}_bv)\overline{\phi}_\fn=0$. Since $\F_\fp\{\tau\}$ has no zero-divisors 
and $\overline{\phi}_\fn\neq 0$, we must have $v\overline{\phi}_b=\overline{\phi}_bv$ for all $b\in A$, so $v\in \cE_\fp$. 

Suppose $\pi_\fp$ acts as a scalar on $\overline{\phi}[\fn]$. This means that there is $c\in A$ such that $\pi_\fp-c$ annihilates $\overline{\phi}[\fn]$. 
By the previous paragraph, this is equivalent to $\pi_\fp-c$ being in $\fn\cE_\fp$; this is equivalent to $A[\pi_\fp]\subset A+\fn\cE_\fp$. 
We can choose an $A$-basis $1, e_1, \dots, e_{r-1}$ of $\cE_\fp$ such that 
$A[\pi_\fp]=A+\fb_{\fp, 1}e_1+\cdots+\fb_{\fp, r-1}e_{r-1}$.  
Since $\fb_{\fp, 1}$ divides $\fb_{\fp, 2}, \dots, \fb_{\fp, r-1}$, the inclusion $A[\pi_\fp]\subset A+\fn\cE_\fp$ is equivalent to 
$\fn\mid \fb_{\fp, 1}$. Thus, $\pi_\fp$ acts as a scalar on $\overline{\phi}[\fn]$ if and only if $\fn$ divides $\fb_{\fp, 1}$. 

Now note that $\pi_\fp$ acts as the identity on $\overline{\phi}[\fn]$ if and only if $\pi_\fp$ acts as a scalar $c$ on $\overline{\phi}[\fn]$ 
and $c\equiv 1\ (\mod\ \fn)$. If $\pi_\fp$ acts as a scalar then the characteristic polynomial of $\pi_\fp$ satisfies 
$P_\fp(X)\equiv (X-c)^r\ (\mod\ \fn)$. (This congruence is not sufficient for $\pi_\fp$ acting as a scalar on $\overline{\phi}[\fn]$, if 
the action is not semi-simple.) We have  
$$
(X-c)^r=  X^r-rcX^{r-1}+\cdots. 
$$
Since $p$ does not divide $r$ by assumption, we see that $c\equiv 1\ (\mod\ \fn)$ if and only if $rc\equiv r\ (\mod\ \fn)$. Hence 
$c\equiv 1\ (\mod\ \fn)$ if and only if $a_{\fp, 1}\equiv -rc\equiv -r\ (\mod\ \fn)$. 
\end{proof}

\begin{rem}\label{rem3.2} If $\fn$ itself is prime, then in Theorem \ref{thm12}
one can dispose with the assumption that $p\nmid r$ as follows: 
Decompose $r=p^sr'$, $s\geq 0$ with $p\nmid r'$. Then  
$\fp\nmid \fn$ splits completely in $F(\phi[\fn])$ if and only if 
$$
a_{\fp, p^s}+r'\equiv 0\ (\mod\ \fn)\quad \text{and}\quad \fb_{\fp, 1}\equiv 0\ (\mod\ \fn). 
$$
The proof is essentially the same except at the end we have 
$$
(X-c)^r=(X^{p^s}-c^{p^s})^{r'}=  X^r-r'c^{p^s}X^{p^s(r'-1)}+\cdots. 
$$
If $\fn$ is prime, then the $p$th power map is an automorphism of $(A/\fn)^\times$. Thus, 
$c\equiv 1\ (\mod\ \fn)$ if and only if $c^{p^s}\equiv 1\ (\mod\ \fn)$; thus, 
$c\equiv 1\ (\mod\ \fn)$ if and only if $a_{\fp, p^s}\equiv -r'c^{p^s}\equiv -r'\ (\mod\ \fn)$. 
\end{rem}

\begin{cor}\label{cor3.2} Let $\phi$ be a Drinfeld $A$-module over $F$ of rank $r\geq 2$. 
For a given $\fn\in A$, the set of primes $\fp\in \cP(\phi)$ such that $\fn$ divides $\fb_{\fp,1}$ 
has positive density. In particular, for any $\fn\in A$ there are infinitely many $\fp\in \cP(\phi)$ 
such that $\fn$ divides the index $\chi(\cE_\fp/A[\pi_\fp])$. 
\end{cor}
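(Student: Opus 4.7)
The plan is to deduce this from the reciprocity law of Theorem \ref{thm12}, but only one direction of it, combined with the Chebotarev density theorem for function fields. The point is that the assumption $p\nmid r$ that restricts Theorem \ref{thm12} is irrelevant for this corollary, because the statement $\fn\mid\fb_{\fp,1}$ is the weaker of the two congruences appearing there.

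First I would isolate the following assertion from the middle of the proof of Theorem \ref{thm12}, which is established without any hypothesis on $p$ or $r$: for every $\fp\in\cP(\phi)$ not dividing $\fn$, the Frobenius $\pi_\fp\in\cE_\fp$ acts as a scalar on $\overline{\phi}[\fn]$ if and only if $\fn\mid\fb_{\fp,1}$. In particular, if $\pi_\fp$ acts as the \emph{identity} on $\overline{\phi}[\fn]$, then automatically $\fn\mid\fb_{\fp,1}$. Next, reducing $\phi_\fn(x)$ modulo $\fp$ gives a canonical $A$-module isomorphism $\phi[\fn]\cong\overline{\phi}[\fn]$ under which the action of $\Frob_\fp\in G_{\F_\fp}$ corresponds to the action of $\pi_\fp$. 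Hence $\fp$ splits completely in the Galois extension $F(\phi[\fn])/F$ if and only if $\pi_\fp$ acts as the identity on $\overline{\phi}[\fn]$, and by the previous sentence this forces $\fn\mid\fb_{\fp,1}$.

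Finally I would invoke the Chebotarev density theorem for the finite Galois extension $F(\phi[\fn])/F$, whose Galois group embeds in $\GL_r(A/\fn)$: the set of primes of $F$ splitting completely in it has positive Dirichlet density equal to $1/[F(\phi[\fn]):F]$. Removing from this set the finitely many primes of bad reduction for $\phi$ and the primes dividing $\fn$ does not affect positivity of the density. The ``in particular'' clause then follows from the definition
\[
\chi(\cE_\fp/A[\pi_\fp])=\fb_{\fp,1}\fb_{\fp,2}\cdots\fb_{\fp,r-1},
\]
so that $\fn\mid\fb_{\fp,1}$ trivially implies $\fn\mid\chi(\cE_\fp/A[\pi_\fp])$.

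I do not anticipate a genuine obstacle: all the algebraic work has already been carried out inside the proof of Theorem \ref{thm12}, and the only point requiring care is the verification that the ``scalar action implies divisibility of $\fb_{\fp,1}$'' half of that proof is completely independent of the parity of $r$ relative to $p$, so the present corollary is unconditional.
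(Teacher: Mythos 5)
Your proposal is correct and follows the paper's own argument essentially verbatim: both extract from the proof of Theorem \ref{thm12} the (characteristic-independent) fact that scalar action of $\pi_\fp$ on $(\phi\otimes\F_\fp)[\fn]$ is equivalent to $\fn\mid\fb_{\fp,1}$, and then apply Chebotarev to the primes splitting completely in $F(\phi[\fn])$. Your write-up is simply a more detailed version of the same proof.
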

\begin{proof} From the proof of of Theorem \ref{thm12} we see that for any prime $\fp$
for which  $\pi_\fp$ acts as a scalar on $(\phi\otimes\F_\fp)[\fn]$ we have $\fn\mid \fb_{\fp,1}$ (this part does not 
use the assumption that $r$ is coprime to $p$). The set of primes that split completely in $F(\phi[\fn])$ have this 
property and positive density $1/[F(\phi[\fn]):F]$ by Chebotarev.   
\end{proof}

Let $\fp\in \cP(\phi)$. As in Section \ref{ssFDM}, we can consider $\F_\fp$ as an $A$-module via $\phi\otimes \F_\fp$. Let 
$$
{^{\phi \otimes\F_\fp}}\F_\fp\cong A/\fd_{\fp, 1}\times \cdots \times A/\fd_{\fp, r} 
$$
be the isomorphism of \eqref{eqdecomposition}. (Keep in mind that $\phi$ in Section \ref{ssFDM} is over $\F_\fp$, whereas in this section 
$\phi$ is over $F$.)

\begin{cor}\label{cor3.3} With notation and assumptions of Theorem \ref{thm12}, we have 
$$
\fd_{\fp, 1}=\mathrm{gcd}(\fb_{\fp,1}, a_{\fp, 1}+r). 
$$
\end{cor}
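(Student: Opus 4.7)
My plan is to reinterpret Lemma \ref{lem1.5} as saying that $\fd_{\fp,1}$ is the largest monic polynomial $\fn$, coprime to $\fp$, such that $\Frob_\fp$ acts as the identity on $(\phi\otimes\F_\fp)[\fn]$, equivalently such that $\pi_\fp$ acts as the identity on this torsion module. From the proof of Theorem \ref{thm12}, for $\fn$ coprime to $\fp$ this identity-action is equivalent to the two simultaneous divisibilities $\fn\mid\fb_{\fp,1}$ and $\fn\mid a_{\fp,1}+r$, i.e.\ to $\fn\mid\fm$, where $\fm:=\gcd(\fb_{\fp,1},\, a_{\fp,1}+r)$. So the largest such $\fn$ is $\fm$ itself, provided $\fp\nmid\fm$. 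Thus the entire content of the corollary is the assertion $\fp\nmid\fm$.

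To prove $\fp\nmid\fm$ I argue by contradiction, supposing $\fp\mid\fb_{\fp,1}$ and $\fp\mid a_{\fp,1}+r$. Using the basis $1,e_1,\ldots,e_{r-1}$ of $\cE_\fp$ from the proof of Theorem \ref{thm12}, together with $\fb_{\fp,1}\mid\fb_{\fp,i}$ for all $i$, the first divisibility forces $A[\pi_\fp]\subset A+\fp\cE_\fp$, so $\pi_\fp=a+\fp\beta$ for some $a\in A$ and $\beta\in\cE_\fp$. Taking traces from $K=F(\pi_\fp)$ to $F$ yields $-a_{\fp,1}=ra+\fp\Tr_{K/F}(\beta)$, hence $a_{\fp,1}\equiv -ra\pmod{\fp}$. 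Combining with $\fp\mid a_{\fp,1}+r$ and the invertibility of $r$ modulo $\fp$ (which uses $p\nmid r$), one obtains $a\equiv 1\pmod{\fp}$, so $\pi_\fp-1=\fp\gamma$ for some nonzero $\gamma\in\cE_\fp$ (nonzero because $[F(\pi_\fp):F]=r\geq 2$ forces $\pi_\fp\neq 1$).

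The decisive step---and where I expect the real friction---is a degree computation on the norm identity $N_{K/F}(\pi_\fp-1)=\fp^r\, N_{K/F}(\gamma)$. On the left, $N_{K/F}(\pi_\fp-1)=(-1)^r P_\fp(1)$, and by Proposition \ref{prop1.7} the ideal $(P_\fp(1))$ equals $\chi(\phi\otimes\F_\fp)$, which has $A$-degree exactly $d=\deg\fp$. On the right, $\gamma\in\cE_\fp\subset\cO_\fp$ and $\gamma\neq 0$ force $N_{K/F}(\gamma)\in A\setminus\{0\}$, so the right-hand side has $A$-degree at least $rd$. Comparing gives $d\geq rd$, contradicting $r\geq 2$ and $d\geq 1$. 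This rules out $\fp\mid\fm$ and forces $\fd_{\fp,1}=\fm$, as required.
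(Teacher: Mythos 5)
Your proposal is correct, and its core is exactly the paper's argument: the paper's proof of Corollary \ref{cor3.3} is just the observation that $(\phi\otimes\F_\fp)[\fn]\subset\F_\fp$ iff $\fp$ splits completely in $F(\phi[\fn])$, combined with Lemma \ref{lem1.5} and Theorem \ref{thm12} — i.e.\ your first paragraph. Where you go beyond the paper is in isolating and proving the implicit claim that $\fp\nmid\gcd(\fb_{\fp,1},a_{\fp,1}+r)$, without which Lemma \ref{lem1.5} and Theorem \ref{thm12} only identify $\fd_{\fp,1}$ with the prime-to-$\fp$ part of the gcd. The paper's two-line proof passes over this; it is genuinely needed (Proposition \ref{prop1.3} gives $\deg a_{\fp,1}\le d/r<d$, which disposes of the issue unless $a_{\fp,1}+r=0$, and in that corner case one still must rule out $\fp\mid\fb_{\fp,1}$). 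Your trace-and-norm argument for this point is correct: $\fp\mid\fb_{\fp,1}$ and $\fp\mid a_{\fp,1}+r$ force $\pi_\fp-1\in\fp\,\cE_\fp$, and then $\deg N_{K/F}(\pi_\fp-1)=\deg\chi(\phi\otimes\F_\fp)=d$ (Proposition \ref{prop1.7}) contradicts $\deg N_{K/F}(\fp\gamma)\ge rd$. An alternative, purely degree-theoretic route to the same fact is to bound $\deg\fb_{\fp,1}$ via $\fb_{\fp,1}^{\,r-1}\mid\chi(\cO_\fp/A[\pi_\fp])$ and the discriminant of $P_\fp$, but your version uses only results already quoted in the paper. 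So: same approach as the paper for the reduction, plus a correct repair of a small gap.
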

\begin{proof}
From the proof of Theorem \ref{thm12} we see that 
$(\phi \otimes\F_\fp)[\fn]\subset\F_\fp$ if and only if $\fp$ splits completely in $F(\phi[\fn])$.  
The claim then follows from Lemma \ref{lem1.5} and Theorem \ref{thm12}. 
\end{proof}

The previous corollary for $r=2$ already appears in \cite{CP}. In the $r=2$ case, from Corollary \ref{cor3.3}, Proposition \ref{thm4} 
and Proposition \ref{prop1.7} we also get that 
$$
\frac{1+a_{\fp, 1}+\eps(\phi\otimes\F_\fp)\fp}{\mathrm{gcd}(\fb_{\fp,1}, a_{\fp, 1}+r)} \in A, 
$$
and this polynomial generates $\fd_{\fp, 2}$.


\section{Large Indices: Proof of Theorem \ref{thm11}} 

\subsection{Preliminaries} Fix a prime $\fl\lhd A$ and denote $K:=F_\fl$. 
Let $V$ be a vector space of dimension $r$ over $K$. 
For $u\in \End_K(V)$, denote by $\Delta(u)$ the discriminant of the characteristic polynomial 
of $u$. Note that the characteristic polynomial of $u$ has no multiple roots over $K^\alg$ 
if and only if $\Delta(u)\neq 0$. 

Denote by $Z(u)$ the centralizer of $u$ in $\End_K(V)$. Assume $\Delta(u)\neq 0$. 
Obviously, $K[u]\subseteq Z(u)$, but since $K[u]$ is a maximal torus in $\End_K(V)$ and $Z(u)=Z(K[u])$, we in fact have 
$K[u]= Z(u)$. Denote 
$$
Z(u)^\circ=\{w\in Z(u)\mid \Delta(w)\neq 0\}. 
$$ 
Since the complement of $Z(u)^\circ$ in $Z(u)$ is the locus of vanishing of $\Delta$, $Z(u)^\circ$ is open and 
everywhere dense in $Z(u)$ with respect to the $\fl$-adic topology. It is clear that for any $w\in Z(u)^\circ$ 
we have $Z(w)=Z(u)$. 

\begin{lem}\label{lem3.1}
The map 
$$
\Psi_u: \Aut_K(V)\times Z(u)^\circ \to \Aut_K(V), \quad (g, w)\mapsto gwg^{-1}
$$
is an open map with respect to the $\fl$-adic topology, i.e., the image under $\Psi_u$ of any open subset of 
$\Aut_K(V)\times Z(u)^\circ$ is open in $\Aut_K(V)$. 
\end{lem}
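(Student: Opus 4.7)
The plan is to show that the derivative $D\Psi_u$ is surjective at every point of $\Aut_K(V)\times Z(u)^\circ$, and then to invoke the submersion theorem for $\fl$-adic analytic maps---a standard consequence of the non-archimedean inverse function theorem---to conclude that $\Psi_u$ is an open map.

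Since $\Aut_K(V)$ is open in $\End_K(V)$ and $Z(u)^\circ$ is open in $Z(u)$, I identify the tangent space at $(g,w)$ with $\End_K(V)\oplus Z(u)$. Using the expansion $(g+tX)^{-1}=g^{-1}-tg^{-1}Xg^{-1}+O(t^2)$, a routine differentiation yields
\[
D\Psi_u(g,w)(X,Y)=Xwg^{-1}-gwg^{-1}Xg^{-1}+gYg^{-1}.
\]
Set $h:=gwg^{-1}$. Substituting $X=Zg$, which is permissible since $g\in\Aut_K(V)$ makes $Z\mapsto Zg$ a bijection on $\End_K(V)$, the first two terms reduce to $Zh-hZ=-\mathrm{ad}_h(Z)$. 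As $Y$ varies over $Z(u)$, the term $gYg^{-1}$ ranges over $gZ(u)g^{-1}=Z(h)$, using the excerpt's remark that $Z(w)=Z(u)$ for $w\in Z(u)^\circ$. Hence the image of $D\Psi_u(g,w)$ equals $\im(\mathrm{ad}_h)+Z(h)$.

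The main step is to verify that $\End_K(V)=Z(h)\oplus \im(\mathrm{ad}_h)$. This is valid because $\Delta(h)=\Delta(w)\neq 0$ forces $h$ to have distinct eigenvalues over $K^\alg$ and in particular to be semisimple. Writing $\mathrm{ad}_h=L_h-R_h$ as the difference of the commuting semisimple operators of left and right multiplication by $h$, one sees that $\mathrm{ad}_h$ is itself semisimple on $\End_K(V)$; since $\ker(\mathrm{ad}_h)=Z(h)$, the decomposition $\End_K(V)=Z(h)\oplus\im(\mathrm{ad}_h)$ follows at once. (Alternatively, one can diagonalize $h$ over $K^\sep$, observe that $\mathrm{ad}_h$ acts on $E_{ij}$ by the scalar $\lambda_j-\lambda_i$, and descend by Galois.) Combined with the image computation this shows $D\Psi_u$ is surjective at every point, and the $\fl$-adic submersion theorem then yields openness of $\Psi_u$. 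The only genuine step is this surjectivity; openness is then formal from the standard non-archimedean analytic machinery.
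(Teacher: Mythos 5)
Your proposal is correct and follows essentially the same route as the paper: compute the differential of $\Psi_u$, reduce its surjectivity to the decomposition $\End_K(V)=Z(h)\oplus\im(\mathrm{ad}_h)$ for the regular semisimple element $h=gwg^{-1}$, and conclude by the non-archimedean submersion theorem. The only (cosmetic) difference is that you establish this decomposition intrinsically via semisimplicity of $\mathrm{ad}_h=L_h-R_h$, whereas the paper base-changes to split $Z(u)$ and solves $[M',t]=X_2$ explicitly in matrix entries using $t_i\neq t_j$.
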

\begin{proof}
It is enough to prove that the induced map on tangent spaces of the corresponding $\fl$-adic manifolds is surjective; cf. 
\cite[Cor. 4.5]{Schneider}. 
Since the map on tangent spaces is a homomorhism of vector spaces over $K$, the property of this map being surjective 
is invariant under base change. Thus, after possibly extending the base field $K$, we can assume that $u$ 
has all its eigenvalues in $K$, so $Z(u)$ is a split torus. Then, after fixing an appropriate basis of $V$, we 
identify $\Aut_K(V)$ with $\GL_r(K)$ and $Z(u)$ with diagonal matrices in $M_r(K)$. 
We show that for any $(g, t)\in \Aut_K(V)\times Z(u)^\circ$, the 
derivative 
$$
d\Psi_u\big|_{(g,t)}: M_r(K)\times Z(u) \to M_r(K)
$$
is surjective. 

A small calculation shows that $d\Psi_u\big|_{(g,t)}(M,N)=gNg^{-1}+Mtg^{-1}-gtg^{-1}Mg^{-1}$. 
Substituting $M'=g^{-1}M$, we can write $d\Psi_u\big|_{(g,t)}(M,N)=g(N+[M', t])g^{-1}$. 
Since we assumed $u$ to be diagonal, $t$ is also a diagonal matrix with \textit{distinct} entries. 
It would be enough to show that for any $X\in M_r(K)$, we can find $M'$ and $N$, such that 
$(N+[M', t])=g^{-1}Xg=:X'$. 
Any $X'\in M_r(K)$ can be written as $X'=X_1+X_2$, where $X_1$ is a diagonal matrix  
and $X_2$ has zeros on the diagonal. We can solve for $M'=(m_{ij})$ such that $[M', t]=(m_{ij}(t_i-t_j))=X_2$ 
since $t_i\neq t_j$ for $i\neq j$. Finally, if we take $M=gM'$ and $N=X_1$ then we get 
$d\Psi_u\big|_{(g,t)}(M,N)=gX'g^{-1}=X$.   
\end{proof}

Let $\La$ be an $A_\fl$-lattice in $V$ of maximal rank $r$. 
Consider the intersection 
$$
\cZ(u):=Z(u)\bigcap \End_{A_\fl}(\La)\subset \End_{A_\fl}(\La)\subset \End_K(V). 
$$
It is clear that $\cZ(u)$ coincides with the centralizer of $u$ in $\End_{A_\fl}(\La)$ and is an ${A_\fl}$-order in $Z(u)$. 


\begin{prop}\label{keyProp}
Let $G$ be an open compact subgroup of $\Aut_{A_\fl}(\La)$. The set 
$$
U=\{\gamma\in G\mid \cZ(\gamma)\cong \cZ(u)\}
$$
is open in $G$. 
\end{prop}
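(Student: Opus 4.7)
The plan is to prove openness of $U$ by producing, for each $\gamma_0\in U$, an explicit open neighborhood of $\gamma_0$ in $G$ contained in $U$, using Lemma \ref{lem3.1} applied at $\gamma_0$ itself as a deformation device.

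Before invoking Lemma \ref{lem3.1}, I would need to verify that $\Delta(\gamma_0)\neq 0$, so that $\gamma_0\in Z(\gamma_0)^\circ$. This is where the hypothesis $\cZ(\gamma_0)\cong \cZ(u)$ is used: tensoring with $K$ gives $Z(\gamma_0)\cong Z(u)=K[u]\cong K[X]/P_u(X)$ as $K$-algebras, and since $\Delta(u)\neq 0$ the right-hand side is étale of dimension $r$. Hence $Z(\gamma_0)$ is étale of dimension $r$, forcing $\gamma_0$ to be regular semisimple with $\Delta(\gamma_0)\neq 0$. (I want to stress that the isomorphism $\cZ(\gamma_0)\cong \cZ(u)$ must be read as $A_\fl$-algebras, not just as $A_\fl$-modules, for this step to work.)

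With this in hand, apply Lemma \ref{lem3.1} with $u$ replaced by $\gamma_0$: the map $\Psi_{\gamma_0}\colon \Aut_K(V)\times Z(\gamma_0)^\circ \to \Aut_K(V)$ is open. Since $G$ is open in $\Aut_{A_\fl}(\La)$ and the latter is open in $\Aut_K(V)$, I can choose open neighborhoods $W_1\subset G$ of $1$ and $W_2\subset Z(\gamma_0)^\circ$ of $\gamma_0$. Then $\Psi_{\gamma_0}(W_1\times W_2)$ is open in $\Aut_K(V)$ and contains $\gamma_0=\Psi_{\gamma_0}(1,\gamma_0)$, so $\Psi_{\gamma_0}(W_1\times W_2)\cap G$ is an open neighborhood of $\gamma_0$ in $G$. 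The plan is to show this neighborhood lies entirely in $U$.

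The verification is direct. If $\gamma\in \Psi_{\gamma_0}(W_1\times W_2)\cap G$, write $\gamma=gwg^{-1}$ with $g\in W_1\subset G$ and $w\in W_2\subset Z(\gamma_0)^\circ$. Because $w\in Z(\gamma_0)^\circ$, we have $Z(w)=Z(\gamma_0)$, whence $Z(\gamma)=gZ(\gamma_0)g^{-1}$. Crucially, $g\in \Aut_{A_\fl}(\La)$, so conjugation by $g$ is an $A_\fl$-algebra automorphism of $\End_{A_\fl}(\La)$, and therefore
$$
\cZ(\gamma) = Z(\gamma)\cap \End_{A_\fl}(\La) = g\bigl(Z(\gamma_0)\cap \End_{A_\fl}(\La)\bigr)g^{-1} = g\cZ(\gamma_0)g^{-1} \cong \cZ(\gamma_0)\cong \cZ(u),
$$
so $\gamma\in U$. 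The main (mild) obstacle is the preliminary étaleness step — without it one cannot even apply Lemma \ref{lem3.1} at $\gamma_0$. After that, the whole proposition is a direct consequence of openness of $\Psi_{\gamma_0}$, together with the key observation that confining the conjugator $g$ to the lattice-preserving subgroup $G$ is exactly what allows the $K$-algebra isomorphism $Z(\gamma)\cong Z(\gamma_0)$ to descend to an isomorphism of $A_\fl$-orders inside $\End_{A_\fl}(\La)$.
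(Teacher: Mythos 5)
Your proof is correct. It uses the same engine as the paper --- Lemma \ref{lem3.1} plus the observation that conjugation by an element of $\Aut_{A_\fl}(\La)$ carries $\cZ(w)$ to an isomorphic $A_\fl$-order --- but you deploy it differently, and the difference is worth recording. The paper applies $\Psi_u$ once, to $G\times(G\cap Z(u)^\circ)$, and obtains a single non-empty open subset $G'\subset U$; strictly speaking this shows that $U$ has non-empty interior rather than that $U$ is open, which is all the application to Theorem \ref{thm3.5} requires. You instead work pointwise: at each $\gamma_0\in U$ you apply the lemma with $u$ replaced by $\gamma_0$ and produce a neighborhood of $\gamma_0$ inside $U$, which proves the literal statement. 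The price is the preliminary step you correctly identify as the only real obstacle: one must know $\Delta(\gamma_0)\neq 0$ before Lemma \ref{lem3.1} is applicable at $\gamma_0$. Your derivation of this is sound --- $\cZ(\gamma_0)$ is a full $A_\fl$-lattice in $Z(\gamma_0)$, so an $A_\fl$-algebra isomorphism $\cZ(\gamma_0)\cong\cZ(u)$ yields $Z(\gamma_0)\cong Z(u)=K[u]$ as $K$-algebras; a commutative centralizer of dimension $r$ forces $\gamma_0$ to be regular, and \'etaleness then forces the characteristic polynomial to be separable, i.e.\ $\Delta(\gamma_0)\neq 0$. Your caveat that the isomorphism in the definition of $U$ must be read as one of $A_\fl$-algebras (not merely modules) is apt; that is the reading consistent with the paper's intended application, where the conclusion $\cZ(\gamma)=g\cZ(u)g^{-1}$ is an algebra isomorphism. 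In short: same key lemma, but your pointwise version buys the openness of $U$ as literally stated, while the paper's single application of $\Psi_u$ buys only (and needs only) a non-empty open subset of $U$.
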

\begin{proof}
Since $G$ is an open compact subgroup of $\Aut_K(V)$, the intersection 
$$
Z(u)^\circ_G:=G \bigcap Z(u)^\circ
$$
is an open subset in $Z(u)^\circ$. (This subset is non-empty since its closure contains the identity). 
Therefore, by Lemma \ref{lem3.1}, $G':=\Psi_u(G\times Z(u)^\circ_G)$ is an open subset of $\Aut_K(V)$. 
On the other hand, obviously, $G'\subset G$. 

Note that for any $g\in G$ and $w\in Z(u)^\circ$, we have 
$$Z(gwg^{-1})=gZ(w)g^{-1}=gZ(u)g^{-1}.$$
Since $G$ is a subgroup of $\Aut_{A_\fl}(\La)$, this implies
$$
\cZ(gwg^{-1})=g\cZ(w)g^{-1}=g\cZ(u)g^{-1}. 
$$
In particular, $\cZ(\gamma)\cong \cZ(u)$ for $\gamma=gwg^{-1}$. Since every element in $G'$ 
is of this form, we conclude that $\cZ(\gamma)\cong \cZ(u)$ for all $\gamma\in G'$. 
As $G'$ is open in $G$, this finishes the proof. 
\end{proof}

\subsection{Main theorem}\label{ss4.2} Let 
$\phi$ be a Drinfeld $A$-module of rank $r\geq 2$ over $F$. 
Let $\fp\in \cP(\phi)$ and $\cE_\fp:=\End_{\F_\fp}(\phi\otimes\F_\fp)$. 
Let $\fl\lhd A$ be a prime different from $\fp$. 
By \cite{Takahashi}, the Tate module $T_\fl(\phi)$ is unramified at $\fp$, i.e., for any place $\bar{\fp}$ in $F^\sep$ 
extending $\fp$, the inertia group of $\bar{\fp}$ acts trivially on $T_\fl(\phi)$. There is a canonical isomorphism 
$T_\fl(\phi)\cong T_\fl(\phi\otimes\F_\fp)$ which is compatible with the action of 
a Frobenius element $\sigma_\fp$ in the decomposition group of $\bar{\fp}$ on $T_\fl(\phi)$ and the action of $\Frob_\fp\in G_{\F_\fp}$ 
on $T_\fl(\phi\otimes\F_\fp)$; cf. \cite[p. 479]{Takahashi}. Hence using Proposition \ref{prop1.8}, we get 
\begin{align*}
\cE_\fp\otimes_A A_\fl &\cong \End_{A_\fl[G_{\F_\fp}]}(T_\fl(\phi\otimes\F_\fp)) \\ 
& \cong \text{Centralizer of $\Frob_\fp$ in $\End_{A_\fl}(T_\fl(\phi\otimes\F_\fp))$}\\
& \cong \text{Centralizer of $\sigma_\fp$ in $\End_{A_\fl}(T_\fl(\phi))$}.
\end{align*}

Now let $C_\fl$ be a fixed commutative semi-simple $F_\fl$-algebra of dimension $r$ and let $R_\fl$ be an $A_\fl$-order 
in $C_\fl$. Fix an isomorphism of free $A_\fl$-modules 
$R_\fl\cong T_\fl(\phi)$ and extend it linearly to an isomorphism 
$R_\fl\otimes_{A_\fl} F_\fl=C_\fl\cong V_\fl(\phi):=T_\fl(\phi)\otimes_{A_\fl} F_\fl$. 
Let 
$$
C_\fl\xrightarrow{\iota} \End_{F_\fl}(C_\fl)\cong \End_{F_\fl}(V_\fl(\phi)) 
$$
be the embedding given by multiplication, i.e., $\iota(\alpha)(x)=\alpha x$. We identify $C_\fl$ with its image in 
$\End_{F_\fl}(V_\fl(\phi))$. Since $C_\fl$ is a maximal torus, it coincides with its own centralizer $Z(C_\fl)$
in $\End_{F_\fl}(V_\fl(\phi))$. 
\begin{lem}\label{lem3.3} We have:
\begin{itemize}
\item[(i)]
$R_\fl=\{u\in C_\fl\mid u(T_\fl(\phi))\subset T_\fl(\phi)\}$.
\item[(ii)]
$R_\fl=\{u\in \End_{A_\fl}(T_\fl(\phi))\mid u\in Z(C_\fl)\}$.
\end{itemize}
\end{lem}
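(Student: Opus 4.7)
The plan is to reduce both parts to a direct translation of the multiplication action on $R_\fl$ via the fixed isomorphism of $A_\fl$-modules $\theta : R_\fl \xrightarrow{\sim} T_\fl(\phi)$, extended $F_\fl$-linearly to $\theta : C_\fl \xrightarrow{\sim} V_\fl(\phi)$. Under $\theta$, the embedding $\iota$ of $C_\fl$ in $\End_{F_\fl}(V_\fl(\phi))$ by multiplication is transported to the usual multiplication action of $C_\fl$ on itself. Crucially, $1 \in R_\fl$ (as $R_\fl$ is an $A_\fl$-order), and $\theta(1)$ is a specific element of $T_\fl(\phi)$ whose $C_\fl$-orbit under $\iota$ is all of $V_\fl(\phi)$.

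For (i), first suppose $u \in R_\fl$. Since $R_\fl$ is a subring of $C_\fl$, multiplication by $u$ maps $R_\fl$ into $R_\fl$; transporting by $\theta$ gives $\iota(u)(T_\fl(\phi)) \subset T_\fl(\phi)$. Conversely, suppose $u \in C_\fl$ satisfies $\iota(u)(T_\fl(\phi)) \subset T_\fl(\phi)$. Applying this to $\theta(1) \in T_\fl(\phi)$ yields $\iota(u)(\theta(1)) = \theta(u \cdot 1) = \theta(u) \in T_\fl(\phi)$, so by the defining property of $\theta$ we conclude $u \in R_\fl$. This proves (i).

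For (ii), the key observation is that $C_\fl$ is a commutative semisimple $F_\fl$-subalgebra of $\End_{F_\fl}(V_\fl(\phi))$ of dimension $r = \dim_{F_\fl} V_\fl(\phi)$, hence a maximal torus. As noted in the paragraph preceding the lemma, it therefore coincides with its own centralizer: $Z(C_\fl) = C_\fl$. Thus
\[
\{u \in \End_{A_\fl}(T_\fl(\phi)) \mid u \in Z(C_\fl)\} = C_\fl \cap \End_{A_\fl}(T_\fl(\phi)) = \{u \in C_\fl \mid u(T_\fl(\phi)) \subset T_\fl(\phi)\},
\]
which equals $R_\fl$ by (i). No step here is really an obstacle; the only subtle point is keeping track of the identification $\theta$ and the fact that $\theta(1)$ serves as a cyclic vector for the $C_\fl$-action, which is what lets the containment $\iota(u)(T_\fl(\phi)) \subset T_\fl(\phi)$ be tested on a single vector.
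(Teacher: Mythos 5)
Your proof is correct and follows essentially the same route as the paper: the forward inclusion in (i) because $R_\fl$ is a ring, the reverse inclusion by evaluating at (the image of) $1\in R_\fl$, and (ii) by combining (i) with $Z(C_\fl)=C_\fl$. The paper's version is just a terser rendering of the identical argument, leaving the identification $\theta$ implicit.
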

\begin{proof}
(i) We clearly have the inclusion $ R_\fl\subset\{u\in C_\fl\mid u(T_\fl(\phi))\subset T_\fl(\phi)\}$. 
For the reverse inclusion, assume 
$u\in C_\fl$ is such that $u(T_\fl(\phi))\subset T_\fl(\phi)$. Then $u\cdot 1=u\in R_\fl$. 
(ii) This follows from (i) since $Z(C_\fl)=C_\fl$. 
\end{proof}

A simple finite-dimensional commutative algebra over $F_\fl$ is just a field extension of $F_\fl$. Thus
$C_\fl=\prod_{i=1}^h K_i$
where each $K_i$ is a finite algebraic field extension of $F_\fl$. We will assume from now on that each $K_i/F_\fl$ 
is a \textit{separable} extension. Then we have the Primitive Element Theorem, so can use Lemma 2.3 in \cite{Zarhin} to 
prove that (use $A$ instead of $\Z$ in the proof)
\begin{lem}
There exists an invertible element $u_0$ of $C_\fl$ such that $C_\fl=F_\fl[u_0]$. 
\end{lem}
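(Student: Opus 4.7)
The plan is to build $u_0$ componentwise in the decomposition $C_\fl = \prod_{i=1}^h K_i$. First, since each $K_i/F_\fl$ is finite and separable, the classical Primitive Element Theorem supplies $\alpha_i \in K_i$ with $K_i = F_\fl[\alpha_i]$; denote by $f_i(X)\in F_\fl[X]$ the minimal polynomial of $\alpha_i$. I would then look for $u_0$ of the form $(\alpha_1+c_1,\ldots,\alpha_h+c_h)$ with $c_i\in F_\fl$, so that $\beta_i:=\alpha_i+c_i$ still generates $K_i$ and has (irreducible) minimal polynomial $g_i(X):=f_i(X-c_i)$.

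Granting that the shifts $c_i$ can be chosen so that the $g_i$ are pairwise coprime in $F_\fl[X]$ and so that each $\beta_i$ is nonzero, the argument concludes at once by the Chinese Remainder Theorem: evaluation at $u_0$ yields
$$F_\fl[u_0]\;\cong\;F_\fl[X]\big/(g_1\cdots g_h)\;\cong\;\prod_{i=1}^{h}F_\fl[X]/(g_i)\;\cong\;\prod_{i=1}^{h}K_i\;=\;C_\fl,$$
since $\deg(g_1\cdots g_h)=\sum_i[K_i:F_\fl]=r=\dim_{F_\fl}C_\fl$. Invertibility of $u_0$ in $C_\fl$ is precisely the componentwise condition $\beta_i\neq 0$, which is built into the construction.

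The remaining and only substantive point is the existence of such shifts. For a fixed pair $i\neq j$, the polynomials $f_i(X-c_i)$ and $f_j(X-c_j)$ share a root in $F_\fl^\alg$ iff $c_i-c_j$ equals $\rho_j-\rho_i$ for some pair of roots $f_i(\rho_i)=0=f_j(\rho_j)$; this is a \emph{finite} list of forbidden values for the difference $c_i-c_j$. Since each $g_i$ is irreducible, pairwise non-sharing of roots is equivalent to pairwise coprimality. Similarly, $\beta_i=0$ forces $c_i=-\alpha_i$, which lies in $F_\fl$ only when $K_i=F_\fl$, ruling out one value of $c_i$. Because $F_\fl$ is an infinite field (it is a non-archimedean local field with finite residue field), I would pick $c_1,\ldots,c_h\in F_\fl$ successively, at each step avoiding the finitely many bad values of $c_i-c_j$ with $j<i$ coming from pairwise coprimality, together with the at-most-one value ruled out by invertibility.

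The main obstacle, such as it is, is purely combinatorial and is dispatched by this repeated-avoidance argument in the infinite field $F_\fl$; the separability hypothesis enters exactly once, in order for the Primitive Element Theorem to apply to each $K_i$. This is precisely Zarhin's Lemma 2.3 in \cite{Zarhin}, with $\Z$ replaced by $A$: the argument is insensitive to whether the base discrete valuation ring is $\Z_\ell$ or $A_\fl$, depending only on infinitude of the completed fraction field and separability of the constituent extensions.
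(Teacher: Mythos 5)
Your proof is correct, and it is essentially the paper's proof: the paper simply defers to Zarhin's Lemma 2.3 (with $A$ in place of $\Z$), and your shift-and-avoid argument combined with the Chinese Remainder Theorem is exactly the content of that lemma, with the separability hypothesis entering only through the Primitive Element Theorem as you note.
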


Note that the centralizer $Z(u_0)$ of $u_0$ in $\End_{F_\fl}(C_\fl)$ is $Z(C_\fl)=C_\fl$. Hence by Lemma \ref{lem3.3} the 
centralizer $\cZ(u_0)$ of $u_0$ in $\End_{A_\fl}(T_\fl(\phi))$ is $R_\fl$. 
Let $G$ be an open subgroup of $\Aut_{A_\fl}(T_\fl(\phi))$. 
By Proposition \ref{keyProp}, the set 
$$
U=\{u\in G\mid \cZ(u)\cong \cZ(u_0)\}
$$
is open in $G$. 

We will need the following result of Pink \cite{Pink}:
\begin{thm}\label{thmPink} If $\End_{F^\alg}(\phi)=A$, then for any 
finite set $S$ of primes of $A$ the image of the homomorphism 
$$
G_F\to \prod_{\fl\in S} \Aut_{A_\fl}(T_\fl(\phi))
$$
is open. 
\end{thm}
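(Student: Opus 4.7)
The plan is to adapt the strategy Serre developed for $\ell$-adic Galois representations of elliptic curves to the Drinfeld-module setting. I would first reduce the problem to a single prime and then combine the single-prime statements via an independence argument.

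First, I would focus on one prime $\fl \in S$ and prove that $\G_\fl := \rho_{\phi,\fl}(G_F) \subset \Aut_{A_\fl}(T_\fl(\phi)) \cong \GL_r(A_\fl)$ is open. Since $\G_\fl$ is a compact $\fl$-adic Lie subgroup of $\GL_r(F_\fl)$, standard $\fl$-adic Lie theory shows that openness is equivalent to the Lie algebra $\fg_\fl \subset \mathfrak{gl}_r(F_\fl)$ of $\G_\fl$ having full dimension $r^2$. Thus the task is to determine the Zariski closure $G_\fl$ of $\G_\fl$ as an algebraic subgroup of $\GL_{r,F_\fl}$ and show $G_\fl = \GL_{r,F_\fl}$.

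Next, I would analyze the algebraic structure of $G_\fl$. After replacing $F$ by a suitable finite extension (which does not affect openness), $G_\fl$ may be assumed connected. Tate-module semisimplicity (Taguchi's theorem) together with the fact that Frobenius elements are semisimple in $\G_\fl$ (Proposition \ref{prop1.8}) shows $G_\fl$ is reductive. The hypothesis $\End_{F^\alg}(\phi) = A$, combined with the commutant formula, gives $\End_{F_\fl[G_F]}(V_\fl(\phi)) = F_\fl$, hence $G_\fl \subset \GL_{r,F_\fl}$ acts absolutely irreducibly. The heart of the argument is to rule out proper reductive irreducible subgroups of $\GL_r$: here I would invoke Pink's analysis of the image of the inertia group at $\infty$, whose image has a prescribed "Hodge–Tate" weight pattern (dictated by the uniformization of $\phi$ over $\Fi$), and show that no proper reductive subgroup of $\GL_r$ can accommodate such an inertia image simultaneously with the Frobenius eigenvalue constraints coming from Proposition \ref{prop1.3} and Proposition \ref{thm4}.

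For the finite-set statement, having established $G_\fl = \GL_{r,F_\fl}$ for each $\fl \in S$, I would prove an independence theorem: the image of $G_F$ in $\prod_{\fl \in S} \G_\fl$ has Lie algebra equal to the full product $\prod_{\fl \in S} \fg_\fl$. This is a Goursat-style argument applied to the simple factors of the semisimple parts; any non-trivial proper subalgebra would force a non-scalar $\overline{F}$-linear isomorphism between simple subquotients of $V_{\fl_i}$ and $V_{\fl_j}$ for $i \neq j$, which is ruled out by the fact (Proposition \ref{prop1.8}(1)) that the Frobenius characteristic polynomials at unramified primes $\fp \notin S$ lie in $A[X]$ and are independent of $\fl$, combined with the Chebotarev density theorem and the different residue characteristics of distinct $\fl_i$. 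Openness of the image in $\prod_{\fl \in S} \Aut_{A_\fl}(T_\fl(\phi))$ then follows from the single-prime openness applied factorwise.

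The main obstacle is the step establishing $G_\fl = \GL_{r,F_\fl}$ — specifically, excluding exotic reductive irreducible subgroups of $\GL_r$. This is the technical core of Pink's work and requires delicate local analysis at $\infty$ (a Drinfeld-module analogue of Hodge–Tate theory) together with a classification of reductive subgroups compatible with the prescribed weight structure. The independence step, while non-trivial, is comparatively formal once the single-prime result is in hand.
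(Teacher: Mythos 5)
First, a point of order: the paper does not prove this statement at all --- it is quoted verbatim as a theorem of Pink \cite{Pink}, so there is no internal proof to compare against. Your outline does echo the broad architecture of Pink's actual argument (determine the Zariski closure of the image, show it is all of $\GL_{r,F_\fl}$ using semisimplicity of the Tate module, the Tate conjecture to get absolute irreducibility from $\End_{F^\alg}(\phi)=A$, and the weight structure at $\infty$; then an independence argument over the finitely many $\fl\in S$).

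However, there is a genuine gap at the very first step, and it is precisely the step that constitutes the main difficulty of Pink's theorem. You assert that ``standard $\fl$-adic Lie theory shows that openness is equivalent to the Lie algebra $\fg_\fl\subset\mathfrak{gl}_r(F_\fl)$ having full dimension.'' That equivalence is a characteristic-zero phenomenon: it rests on the exponential map and the $p$-adic Lie theory of Lazard--Serre, neither of which is available here, because $F_\fl$ is a local field of characteristic $p$. In equal characteristic a compact subgroup of $\GL_r(F_\fl)$ can be Zariski dense (indeed can have ``full Lie algebra'' in any reasonable sense) without being open --- for instance $\GL_r(\F_q[[t^p]])\subset\GL_r(\F_q[[t]])$, and more generally images of Frobenius twists and of points over closed subfields. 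Overcoming exactly this is the technical core of \cite{Pink}: Pink proves a structure theorem for compact Zariski-dense subgroups of semisimple groups over local fields of arbitrary characteristic, showing that such a subgroup is open only up to passing to a model over a closed local subfield $E\subset F_\fl$, and one must then separately rule out $E\subsetneq F_\fl$ (and the analogous defect in the independence step for several $\fl$). This is done using the fact that the traces of Frobenius generate $F$ over $\F_q$, which in turn uses $\End_{F^\alg}(\phi)=A$ together with the Tate conjecture for Drinfeld modules (Taguchi, Tamagawa). Without this ingredient your argument establishes at best Zariski density of the image, not openness, so the proposal as written does not prove the theorem.
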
 

Assume $\End_{F^\alg}(\phi)=A$. Let $S=\{\fl_1, \dots, \fl_m\}$ be a set of distinct primes. Choose a commutative separable 
semi-simple $F_\fl$-algebra $C_{\fl}$ of dimension $r$ for each $\fl\in S$. Choose an $A_{\fl}$-order 
$R_{\fl}\subset C_{\fl}$ for each $\fl\in S$. Let 
$$
\rho: G_F \to \prod_{\fl\in S} \Aut_{A_{\fl}}(T_{\fl}(\phi))
$$
be the Galois representation arising from the action on $T_{\fl}(\phi)$. 
Let $\prod_{\fl\in S} G_\fl$ be the image of $\rho$. By Theorem \ref{thmPink}, 
$G_\fl$ is an open subgroup of $\Aut_{A_{\fl}}(T_{\fl}(\phi))$. Let $U_\fl\subset G_\fl$ 
be the open subset provided by Proposition \ref{keyProp}. Since by Chebotarev's theorem the Frobenius elements 
are dense in $G_F$, they are also dense in $\prod_{\fl\in S} U_\fl$. In particular, there are infinitely many $\fp$ 
such that the conjugacy class of $\sigma_\fp$ lies in $\prod_{\fl\in S} U_\fl$. Even stronger, the 
set of such primes has positive density by Corollary 2 (b) on page I-8 of \cite{SerreAR}.
Thus, we have proved the following:  

\begin{thm}\label{thm3.5} Assume $\End_{F^\alg}(\phi)=A$. 
The set of primes $\fp\in \cP(\phi)$ such that $\cE_\fp\otimes_A A_{\fl}\cong R_{\fl}$ for all $\fl\in S$ has positive density. 
\end{thm}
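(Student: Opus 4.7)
The plan is to reduce the question about $\cE_\fp\otimes_A A_\fl$ to a question about Frobenius conjugacy classes in a product of automorphism groups, and then invoke Chebotarev density in its refined form. The centralizer description established at the start of Section~\ref{ss4.2} identifies $\cE_\fp\otimes_A A_\fl$ with the centralizer of $\sigma_\fp$ in $\End_{A_\fl}(T_\fl(\phi))$. Thus it suffices to exhibit, for each $\fl\in S$, an open subset $U_\fl\subset\Aut_{A_\fl}(T_\fl(\phi))$ such that every element of $U_\fl$ has centralizer in $\End_{A_\fl}(T_\fl(\phi))$ isomorphic to $R_\fl$, and then to show that the joint Galois representation hits $\prod_\fl U_\fl$ with positive Frobenius density.

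For the first step, I would pick, for each $\fl\in S$, a primitive element $u_0\in C_\fl$ with $F_\fl[u_0]=C_\fl$ — this is possible by the separability hypothesis on each $K_i/F_\fl$ and the cited lemma from \cite{Zarhin}. Then Lemma~\ref{lem3.3} gives $\cZ(u_0)=R_\fl$, so Proposition~\ref{keyProp} furnishes an open neighborhood $U_\fl$ (inside any chosen open compact subgroup) consisting of elements whose centralizer in $\End_{A_\fl}(T_\fl(\phi))$ is isomorphic to $R_\fl$.

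For the second step, Pink's theorem (Theorem~\ref{thmPink}) asserts that the image of the joint representation
\[
\rho: G_F \to \prod_{\fl\in S}\Aut_{A_\fl}(T_\fl(\phi))
\]
is open, so each factor $G_\fl$ is an open compact subgroup of $\Aut_{A_\fl}(T_\fl(\phi))$ and Proposition~\ref{keyProp} applies to it. Intersecting the open subsets $U_\fl\subset G_\fl$ gives an open subset of the full image of $\rho$. By Chebotarev density in the refined form of Serre (Corollary~2(b) on p.~I-8 of \cite{SerreAR}), the set of primes $\fp\in\cP(\phi)$ whose Frobenius conjugacy class lies in $\prod_\fl U_\fl$ has positive density; for every such $\fp$ the centralizer calculation yields $\cE_\fp\otimes_A A_\fl\cong R_\fl$ for all $\fl\in S$ simultaneously.

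The main obstacle is the \emph{joint} openness at finitely many primes at once: without Pink's theorem one could not exclude ``degenerate'' behavior where the Galois image avoids $U_\fl$ at some $\fl\in S$, and the whole argument would collapse. Once Pink is available, the remainder is a combination of the elementary $\fl$-adic geometry of Proposition~\ref{keyProp} (which ultimately rests on the submersion property of the conjugation map computed in Lemma~\ref{lem3.1}) with the standard Chebotarev/Serre density machinery.
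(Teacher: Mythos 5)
Your proposal is correct and follows essentially the same route as the paper: the centralizer identification $\cE_\fp\otimes_A A_\fl\cong \cZ(\sigma_\fp)$, the primitive element $u_0$ with $\cZ(u_0)=R_\fl$ via Lemma~\ref{lem3.3}, the open sets $U_\fl$ from Proposition~\ref{keyProp} applied to the image groups $G_\fl$ (open by Pink's theorem), and Serre's refined Chebotarev statement for positive density. No gaps; this is the paper's argument.
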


Finally, we are ready to prove Theorem \ref{thm11}. 
\begin{proof}[Proof of Theorem \ref{thm11}] Part (1) of the theorem was already proved as Corollary \ref{cor3.2}. 
	Moreover, it easy to see from the proof of Corollary \ref{cor3.2} that to prove part (2) 
	it is enough to show  that for a subset of primes $\fp\in \cP(\phi)$ of positive density 
	$\sigma_\fp$ acts trivially on $\phi[\fn]$, simultaneously with the condition of Theorem \ref{thm3.5}. 
	
Let $\fn=\fq_1^{s_1}\cdots \fq_d^{s_d}$ be the prime decomposition of a given element $\fn\in A$. 
Let $S'=\{\fq_1, \dots, \fq_d\}$. Let  $\prod_{\fl\in S\cup S'} G_\fl$ be the image of $G_F \to \prod_{\fl\in S\cup S'} \Aut_{A_{\fl}}(T_{\fl}(\phi))$
For $\fq\in S'$, let $G_\fq'$ be 
the intersection of $G_\fq$ with the principal congruence 
subgroup of $\GL_r(A_{\fq})$ of level $\fq^{s}$ consisting of matrices which are congruent to $1$ modulo $\fq^{s}$. 
Note that $G_\fq'$ is still open in $\Aut_{A_\fq}(T_{\fq}(\phi))$. For $\fl\in S\setminus S'$, let $G_\fl'=G_\fl$. 
Now to achieve our goal we can simply apply the argument in the proof of Theorem \ref{thm3.5} 
 to $\prod_{\fl\in S\cup S'}G_\fl'$. 
\end{proof}



\section{Algorithms}\label{sA}

In this section we describe algorithms for computing some of the invariants of Drinfeld modules over finite fields 
discussed in Section \ref{ssFDM}. We have implemented these algorithms in \texttt{Magma}. The examples 
presented in this section are based on computer calculations. 

Throughout this section 
$\phi: A\to \F_\fp\{\tau\}$ is a Drinfeld module of rank $r$, $\fp\lhd A$ is a prime of degree 
$d$, and $\gamma: A\to \F_\fp= A/\fp$ is the reduction modulo $\fp$. 

\subsection{Characteristic polynomial of the Frobenius}
Let
$$
P(X)=X^r+a_1X^{r-1}+\cdots+a_r 
$$
be the characteristic polynomial of $\Frob_\fp$ acting on $T_\fl(\phi)$; cf. Proposition \ref{prop1.8}.  
From Proposition \ref{prop1.3} we know that $a_1, \dots, a_r\in A$ and $\deg(a_i)\leq i\cdot\frac{d}{r}$. 
In particular, $a_1, \dots, a_{r-1}$ 
are uniquely determined by their residues modulo $\fp$. We also know from Proposition \ref{thm4} that 
$
a_r 
=\eps(\phi)\fp$. 
Now since $P(X)$ is also the minimal polynomial of $\pi=\tau^d$, we have 
$$
\tau^{dr}+\phi_{a_1}\tau^{d(r-1)}+\cdots +\phi_{a_{r-1}}\tau^d+\phi_{a_r}=0. 
$$ 
Denote 
$$
f_i=\phi_{a_{i}}\tau^{d(r-i)}+\phi_{a_{i+1}}\tau^{d(r-i-1)}+\cdots+\phi_{a_r}\in \F_\fp\{\tau\} 
$$
and 
$$
f_i^{\dag}=\tau^{dr}+\phi_{a_1}\tau^{d(r-1)}+\cdots +\phi_{a_{i-1}}\tau^{d(r-i+1)}.
$$
Note that $\deg_{\tau}\phi_{a_j}\tau^{d(r-j)}\geq d(r-j)$, so the coefficient of $\tau^{d(r-i+1)}$ in $f_{i}^{\dag}$ 
is the constant term of  $\phi_{a_{i-1}}$, i.e., $\gamma(a_{i-1})$. Therefore, 
$$
\gamma(a_{i-1}) = - \text{Coefficient of $\tau^{d(r-i+1)}$ in $f_i$}. 
$$
Since we know $f_r$ explicitly, we can compute all $a_i$ recursively, where we use $a_r, a_{r-1}, \dots, a_{r-i}$ 
to calculate $a_{r-i-1}$. 

Computing $\phi_a$ takes approximately $r\deg(a)^2$ operations (computing $\phi_{T^n}$ recursively via 
$\phi_T\phi_{T^{n-1}}$ takes $\approx nr$ operations, 
so computing $\phi_a$ takes $r(\deg(a)+(\deg(a)-1)+\cdots+1\approx r\deg(a)^2$ operations). 
We conclude that the amount of work involved in the calculation of $P(X)$ is $O(r^2d^2)$, so this is a 
``polynomial time" algorithm; cf. \cite{SchoofEC}.  

\begin{example}
Let $q=3$, $\fp=T^7 - T^2 + 1$, and $\phi_T=T+(T^2+1)\tau+T\tau^2+\tau^3$. Then 
$$
P(X)=X^3+(-T + 1)X^2+(T^3 + T -1)X-\fp. 
$$
\end{example}

\begin{rem} For rank $r=2$ there is a different recursive procedure for computing $P(X)$ 
based on the properties of Eisenstein series; see \cite[Prop. 3.7]{GekelerTAMS}. In practice, Gekeler's  
algorithm seems to have the same efficiency as what was presented above (the amount of computer time 
it took to execute both in \texttt{Magma} were almost identical in our tests). 
\end{rem}


\subsection{Exponent of ${^\phi}\F_\fp$} 
Let 
$$
{^\phi}\F_\fp\cong A/\fd_1\times \cdots \times A/\fd_r,
$$
be the isomorphism of \eqref{eqdecomposition}. We call $\fd_r$ the \textit{exponent} of ${^\phi}\F_\fp$, 
since the fact that $\fd_1\mid\cdots\mid \fd_r$ implies that $\fd_r$ 
is the smallest degree element of $A$ such that $\phi_{\fd_r}$ annihilates $\F_\fp$. The exponent was 
studied in prior papers by Cojocaru and Shulman \cite{CS1}, \cite{CS2}. 


Denote $\theta=\gamma(T)$. Then $\F_\fp=A/\fp$ is an $\F_q$-vector space 
with basis $1, \theta, \theta^2, \dots, \theta^{d-1}$. 
Put 
$$
\fd=x_0+x_1T+\cdots +x_{d-1}T^{d-1}+T^d
$$
for a hypothetical annihilator of ${^\phi}\F_\fp$, i.e., $\phi_{\fd}$ acts as zero on $\F_\fp$. Let $k\in \{0, \dots, d-1\}$. Compute 
$$
\phi_{T^i}(\theta^k)=\alpha_{i,1}+\alpha_{i,2}\theta+\cdots+\alpha_{i,d}\theta^{d-1} 
$$
for $i=1,2,\dots, d$, which can be easily done by repeated application of $\phi_T$. Let $M_k$ 
be the $d\times d$ matrix whose first row consists of zeros except at position $k+1$ where it is $1$, and the 
$(i+1)$-th row is $[\alpha_{i,1}, \alpha_{i,2},\dots, \alpha_{i,d}]$ for $1\leq i\leq d-1$. Let 
$N_k=-[\alpha_{d, 1}, \dots, \alpha_{d, d}]^t$. Then $\phi_\fd$ acting as $0$ on $A/\fp$ 
is equivalent to $\phi_{\fd}(\theta^k)=0$ for all $k=0, \dots, d-1$, which itself is equivalent to  
$$
[x_0, \dots, x_{d-1}]M_k=N_k \quad \text{for all}\quad  k=0, \dots, d-1. 
$$
This system of linear equations always has a solution (since ${^\phi}\F_\fp$ has exponent). 
Find a particular solution $\mathbf{x}$ and find a basis 
$\mathbf{b}_1, \dots, \mathbf{b}_h$ for the intersection of null-spaces of all $M_k$, so that every other solution 
is of the form $\mathbf{x}+\mathrm{span}(\mathbf{b}_1, \dots, \mathbf{b}_h)$. 
It is easy to see that the exponent of ${^\phi}\F_\fp$ 
is the gcd of $f_{\mathbf{x}}, f_{\mathbf{x}+\mathbf{b}_1}, \dots,  f_{\mathbf{x}+\mathbf{b}_h}$, 
where $f_{\mathbf{y}}:=y_0+\dots+y_{d-1}T^{d-1}+T^d$ for $\mathbf{y}=(y_0, \dots, y_{d-1})$. 
This can be easily computed. Computing all $\phi_{T^i}(\theta^k)$ can be done in polynomial time in $d$, 
solving the system of linear equations also can be done in polynomial time in $d$. Hence we can find the exponent 
$\fd_r$ of ${^\phi}\F_\fp$ in polynomial time in $d$.

\begin{example} Suppose we want to compute all $\fd_1, \dots, \fd_r$. 
	The previous algorithm allows us to computed $\fd_r$. Since $\fd_1\mid \fd_2\mid \cdots\mid \fd_r$, this already 
	gives us only finitely many possibilities for these invariants. To further restrict the possibilities, one can compute 
	$P(x)$, which then allows to compute the product $\fd_1\cdot \fd_2\cdots \fd_r=P(1)A=\chi(\phi)$, thanks to 
	Proposition \ref{prop1.7}. One can also uniquely determine $\fd_1$ using Lemma \ref{lem1.5}. In practice, 
	knowing $\fd_1, \fd_r$, and the product $\prod_{i=1}^r\fd_r$ is usually sufficient to uniquely determine all $\fd_i$'s. 
	(Obviously, this is always the case when $r\leq 3$.) 
	When this is not sufficient, one can determine these invariants by computing the dimension of the null space of possible $\phi_{\fd_i}$ 
	by an argument used in the algorithm for computing $\fd_r$.   
	
	In this example we take $q=3$ and compute $\fd_1, \fd_2, \fd_3$ for  the Drinfeld $A$-module 
	$$
	\phi_T=\theta+\theta\tau+\tau^3 
	$$ 
over $\F_\fp$ for varying primes $\fp$, which are chosen specifically to demonstrate different possible situations. 

First, let 
$\fp=T^7 + T^5 + T -1$. Then the exponent is $\fd_3=T^3(T+1)(T-1)$. We also have 
$P(X)=X^3-X^2-(T^3 -T +1)X-\fp$.  
Hence $\chi(\phi)=T^3(T+1)^2(T-1)^2$. This implies $\fd_2=(T+1)(T-1)$ and $\fd_1=1$. 

Next, let $\fp=T^8 + T^7 + T^6 + T^4 - T^3 - T^2 -1$. Then 
$\fd_3=(T+1)^2(T-1)^3$ and $\chi(\phi)=(T+1)^3(T-1)^5$. Hence either 
$$
\fd_2=(T+1)(T-1)^2, \quad \fd_1=1,
$$
or 
$$
\fd_2=(T+1)(T-1), \quad \fd_1=(T-1). 
$$
If $\fd_1$ is not $1$, then, by Lemma \ref{lem1.5}, $\phi_{T-1}(x)=(T-1)x+Tx^3+x^{27}$ splits completely modulo $\fp$. This is easy to 
check on a computer to be false, hence $\fd_2=(T+1)(T-1)^2$ and $\fd_1=1$. 

Finally, let 
$
\fp=T^{14} + T^{13} + T^{12} + T^5 -T^2 + T + 1$. 
Then 
$$
\fd_3 =T(T + 1)(T -1)(T^2 + 1)^2(T^4 - T -1). 
$$
and 
$$
\chi(\phi)=T^3(T + 1)^2(T -1)(T^2 + 1)^2(T^4 - T -1). 
$$
One checks that $\phi[T]$ is rational over $\F_\fp$. Hence we must have 
$$
\fd_1=T, \qquad \fd_2=T(T+1). 
$$
\end{example}

\subsection{Endomorphism ring}\label{ssER}

Now assume $r=2$. Let 
$$
P(X)=X^2-aX+\eps(\phi)\fp,
$$
be the characteristic polynomial of the Frobenius, so $a$ is the trace of $\rho_{\phi, \fl}(\Frob_\fp)$. 
Let $\pi$ be a root of $P(X)$. Then $K=F(\pi)$ is 
an imaginary quadratic extension of $F$. Let $\cO_K$ be the integral closure of $A$ in 
$K$.  Denote $\cE_\phi:=\End_{\F_\fp}(\phi)$. By Proposition \ref{thm1}, we have the inclusion of orders, 
$$
A[\pi]\subset \cE_\phi\subset \cO_K.  
$$
Let $c_\pi$ (resp. $c_\phi$) be the index of $A[\pi]$ (resp. $\cE_\phi$) in $\cO_K$. These are monic 
polynomials in $A$ such that $c_\phi$ divides $c_\pi$; note that $b=c_\pi/c_\phi$ is the (refined) index of $A[\pi]$ in $\cE_\phi$ 
that appears in Theorem \ref{thm12}. 
Orders in quadratic extensions are uniquely determined by their indices: 
$$
A[\pi]=A+c_\pi\cO_K,\qquad \cE_\phi=A+c_\phi\cO_K, 
$$
so to determine $\cE_\phi$ it is enough to determine $c_\phi$. 

We have  
$\cO_K=A[\alpha]$ for some $\alpha$ satisfying a monic quadratic polynomial $f(X)\in A[X]$. Note that 
$A[c_\pi \alpha]=A[\pi]$, so $c_\pi \alpha=m+n\pi$, where $m\in A$ and $n\in \F_q^\times$. 
Suppose we are able to do the following:
\begin{enumerate}
\item[(i)] Compute $f(X)$. 
\item[(ii)] Compute $c_\pi$.
\item[(iii)] Compute $m, n$ such that $c_\pi \alpha=m+n\pi$. 
\end{enumerate}
Then $c_\phi$ can be computed using the following process: 
Initially, put $c_1=c_\pi$. Let $c\neq 1$ run through monic divisors of $c_1$. For a given $c$ 
we look for $x\in \F_\fp\{\tau\}$ such that  
\begin{equation}\label{eqCentral}
x\phi_c=\phi_m+n \tau^d. 
\end{equation}
If we write $x=x_0+x_1\tau+\cdots+x_s \tau^s$, where $s=\deg_\tau(n\tau^d+\phi_m)-2\deg(c)$, then \eqref{eqCentral} 
gives a system of \textit{linear} equations in $x_0, \dots, x_m$, so can be easily solved. Of course, this system of 
linear equations might not have any solutions, but when it does, the solution $x$ is unique. For such a 
solution we check whether $x\phi_T=\phi_Tx$. If this condition holds (so $x\in \cE_\phi$) 
then we replace $c_1$ 
by $c_1/c$ and repeat the process. Eventually, we will either end up with $c_1=1$ or will not find any $x$ satisfying the 
necessary conditions. In that case, 
the process terminates and the index of $\cE_\phi$ is $c_\phi=c_1$. 
Note that this process also computes a generator of $\cE_\phi$ over $A$. Indeed, it is easy to see that $\cE_\phi=A[x]$, 
where $x$ is the solution of $x\phi_{c_\pi/c_\phi}=n\tau^d+\phi_m$. 

Now we address the question of how to carry out (i)-(iii). There are three cases, which need to be treated 
separately: 

\boxed{$\text{Case 1: $q$ is odd}.$} Let 
$
\Delta_\pi:= a^2- 4\eps(\phi)\fp$. 
Note that $\Delta_\pi\in A$ has degree $\leq d$.  
We can decompose any polynomial $h(T)\in A$ as $h(T)=c^2e$, where $c$ is monic and $e$ is square-free. 
Decompose $\Delta_\pi$ in this manner  
$$
\Delta_\pi:= c^2\cdot \Delta_{\max}. 
$$
Then $f(X)=X^2-\Delta_{\max}$ and $c_\pi=c$; this gives (i) and (ii). If we fix a root $\alpha$ of $f(X)$, then (iii) follows from the 
quadratic formula: $2\pi=a+c_\pi\alpha$. 

\boxed{$\text{Case 2: $q$ is even and $a=0$}.$} This is equivalent to $K/F$ being inseparable. Let $g:=\eps(\phi) \fp$. 
Then $K$ is defined by the equation $X^2=g$. The polynomial $g$ decomposes (uniquely) 
as $g=g_e+g_o$, where $g_e$ (resp. $g_o$) is a polynomial in $T$ whose terms all have even degrees (resp. odd degrees). 
Then $g_e=s^2$ and $g_o=Tc^2$ for uniquely determined $s, c\in A$. After a change of variables $X\mapsto X+s$, 
we see that $A[\sqrt{g}]=A[c\sqrt{T}]$. Hence $\cO_K=A[\sqrt{T}]$, $f(X)=X^2+T$, $c_\pi=c$, and 
$\pi+s=c_\pi\sqrt{T}$. 

\vspace{0.1in}

\boxed{$\text{Case 3: $q$ is even and $a\neq 0$}.$} 
This is the most complicated case; it is equivalent to $K/F$ being separable in characteristic $2$. 
By \cite[III.6, Cor.1]{SerreLF}, 
we have an equality of ideals
$$
(P'(\pi))=(a)=(\cD_K\cdot c_\pi), 
$$
where $\cD_K$ is the different of $\cO_K$ over $A$. Thus, to compute $c_\pi$ we need to compute $\cD_K$. To do this, 
we first recall some facts about Artin-Schreier extensions. 

Any quadratic separable extension $K/F$ is the splitting field of a polynomial  
$$
X^2+X=\fn/\fm
$$
for some $\fn,\fm\in A$ coprime to each other. Suppose $\fm=\fq^{2e}\fm_1$, where $e\geq 1$, $\fq$ is a prime, and $\fq\nmid \fm_1$.  
After a change of variables, $X\mapsto X+b/\fp^e$, $b\in A$, we get 
$$
X^2+X=\frac{\fn+b^2\fm_1+\fq^e\fm_1 b}{\fm}.  
$$
Since $\fn$ and $\fm_1$ are coprime to $\fq$, and squaring is an automorphism of $A/\fq$, we can choose $b$ 
such that $\fn+b^2\fm_1+\fq^e\fm_1 b$ is divisible by $\fq$. Repeating this process finitely many times, 
we can assume that 
$$
\fm=\fq_1^{2e_1-1}\cdots \fq_s^{2e_s-1}, \quad e_1,\dots, e_s\geq 1. 
$$
Then, by \cite[Cor. 2.3]{AST}, 
$$
\cD_K=\fq_1^{e_1}\cdots \fq_s^{e_s}
$$
After a change of variables $X\mapsto X/\epsilon\cD_K$, 
we can rewrite $X^2+X=\fn/\fm=\fq_1\cdots \fq_s \fn/\cD_K^2$ as 
$$
X^2+\epsilon\cD_K X=\epsilon^2\fq_1\cdots \fq_s \fn,
$$
where $\epsilon\in \F_q^\times$ is arbitrary. 
\begin{lem} Let $\alpha$ be a root of  
$f(X)=X^2+\epsilon \cD_K X+\epsilon^2\fq_1\cdots \fq_s \fn$. Then $\cO_K=A[\alpha]$. 
\end{lem}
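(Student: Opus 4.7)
The plan is to show $\cO_K = A[\alpha]$ by a direct discriminant comparison. First I would verify that $\alpha$ is integral over $A$, which amounts to unwinding the motivating calculation behind the definition of $f$: if $\beta\in K$ is an Artin-Schreier generator with $\beta^2+\beta=\fn/\fm$ for the normalized $\fm=\fq_1^{2e_1-1}\cdots\fq_s^{2e_s-1}$, then the identity $\cD_K^2=\fm\cdot\fq_1\cdots\fq_s$ gives
$$
(\epsilon\cD_K\beta)^2 + \epsilon\cD_K\cdot(\epsilon\cD_K\beta) = \epsilon^2\cD_K^2(\beta^2+\beta) = \epsilon^2\cD_K^2\cdot\fn/\fm = \epsilon^2\fq_1\cdots\fq_s\fn
$$
in characteristic $2$, so $\alpha:=\epsilon\cD_K\beta$ is a root of the monic polynomial $f(X)$. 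Since $f$ has coefficients in $A$, this forces $\alpha\in\cO_K$ and hence $A[\alpha]\subset\cO_K$.

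Next, I would compute both discriminants and observe that they agree. For the order $A[\alpha]$, the discriminant is the discriminant of $f$, and in characteristic $2$ the $4c$ term disappears, giving
$$
\disc(A[\alpha]/A) = (\epsilon\cD_K)^2 - 4\epsilon^2\fq_1\cdots\fq_s\fn = (\cD_K^2).
$$
For the maximal order, the formula $\cD_K=\fq_1^{e_1}\cdots\fq_s^{e_s}$ from \cite[Cor.~2.3]{AST} is (as used implicitly in the derivation of the relation $(P'(\pi))=(\cD_K\cdot c_\pi)$ at the start of Case~3) exactly the monic generator of the $A$-ideal whose square is $\disc(\cO_K/A)=\fq_1^{2e_1}\cdots\fq_s^{2e_s}$. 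Thus $\disc(\cO_K/A)=(\cD_K^2)$ as well.

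Finally, from the standard relation $\disc(A[\alpha]/A)=[\cO_K:A[\alpha]]^2\cdot\disc(\cO_K/A)$, the index $[\cO_K:A[\alpha]]$ has square $1$, hence equals $1$, proving $\cO_K=A[\alpha]$. The only subtle point is keeping the bookkeeping around $\cD_K$ consistent in characteristic $2$: one must interpret it throughout as the monic polynomial in $A$ whose square is the discriminant of $K/F$, after which the argument is essentially mechanical.
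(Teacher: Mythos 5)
Your proof is correct, but it reaches the conclusion by a different (though closely related) standard criterion than the paper. The paper's argument is a one-line appeal to the different criterion for monogenic orders (Serre, \emph{Local Fields}, III.6, Cor.~2): since in characteristic $2$ the derivative $f'(X)$ degenerates to the constant $\epsilon\cD_K$, the ideal $(f'(\alpha))$ is exactly the different of $\cO_K$ over $A$, which forces $A[\alpha]=\cO_K$. You instead compare discriminants, using $\disc(A[\alpha]/A)=\bigl[\cO_K:A[\alpha]\bigr]^2\disc(\cO_K/A)$ together with $\disc(f)=(\epsilon\cD_K)^2$ and $\disc(\cO_K/A)=(\cD_K^2)$. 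The two routes are essentially equivalent --- $\disc(f)$ is, up to sign, the norm of $f'(\alpha)$, so your discriminant comparison is the ``normed-down'' version of the paper's different comparison --- and both hinge on the single observation that $f'$ is the constant $\epsilon\cD_K$. Your explicit check that $\alpha=\epsilon\cD_K\beta$ is integral, via $\cD_K^2=\fm\cdot\fq_1\cdots\fq_s$, usefully spells out what the paper dismisses as ``clear from the construction''. The one bookkeeping point you rightly flag is that $\cD_K$ must be read throughout as the monic element of $A$ generating $\fq_1^{e_1}\cdots\fq_s^{e_s}$, so that the different of $\cO_K/A$ is the extended ideal $\cD_K\cO_K$, whose norm is indeed $(\cD_K^2)$; with that convention your computation of $\disc(\cO_K/A)$ is justified by the standard relation between discriminant and different for the separable extension $K/F$, and the argument is complete.
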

\begin{proof}
It is clear from the construction that $K=F(\alpha)$. Since $f(X)\in A[X]$ is monic, 
we have $A[\alpha]\subset \cO_K$. 
On the other hand, $f'(X)=\epsilon \cD_K$, so $A[\alpha]= \cO_K$ by \cite[III.6, Cor. 2]{SerreLF}. 
\end{proof}

To compute the different of the extension defined by the characteristic polynomial of the Frobenius $X^2+aX=\eps(\phi)\fp$, 
we first make a change of variables $X\mapsto aX$, then divide both sides by $a^2$, obtaining $X^2+X=\eps(\phi)\fp/a^2$.  
Then we apply the process of the previous paragraph. Let $\epsilon$ be the leading coefficient of $a$ as a polynomial in $T$. 
We have computed 
the minimal polynomial $f(X)=X^2+\epsilon \cD_K X+b$ of an element $\alpha$ generating $\cO_K$. Then $c_\pi \alpha$ is a root of 
$X^2+aX+c_\pi^2b=0$. This implies that $c_\pi\alpha=m+\pi$, where $m\in A$ is such that 
$$
m^2+am=c_\pi^2b+\eps(\phi)\fp. 
$$
If we write $m=m_0+m_1T+\cdots$ as a polynomial in $T$ whose coefficients $m_i$ are unknowns, then the previous equality 
reduces to solving a system of quadratic equations over $\F_q$, which can be done recursively 
starting with the constant term $m_0$. In fact, the first non-zero coefficient $m_h$ of $m$ is determined from a quadratic equation over $\F_q$, while 
every other $m_i$, $i\geq h$, can be deduced from a linear equation in $m_i$ with coefficients involving $m_h, \dots, m_{i-1}$.    

\begin{example}
Let $q=3$ and $\phi$ be a Drinfeld $A$-module of rank $2$ over $F$ given by $$\phi_T=T+g_1\tau+g_2\tau^2.$$ 
Tables \ref{T1} and \ref{1T} list the invariants of $\phi\otimes \F_\fp$ for primes $\fp$ of degree $6$ 
in cases when $A[\pi_\fp]\neq \cE_\pi$. 
The total number of primes of degree $6$ in $\F_3[T]$ 
is 116, while the number of primes in Table \ref{T1} (resp. Table \ref{1T}) is 24 (resp. 20). 
\end{example}

\begin{example}
 $\F_4$ is generated over $\F_2$ by $w$ satisfying $w^2+w+1=0$. Table \ref{Teven} lists computational data for 
 $$\phi_T=T+T\tau+\tau^2,$$ 
 which involves Cases 2 and 3 of the algorithm computing $\cE_\phi$. 
\end{example}

\begin{table}
\scalebox{0.8}{
\begin{tabular}{ | c || c | c | c | c | c | }
\hline
$\fp$ & $a$ & $\eps(\phi)$ & $c_\pi$ & $c_\phi$ & $\Delta_{\max}$ \\ 
\hline\hline
$T^6 + 2T^5 + 2T^3 + T^2 + 2T + 2$ & $2T^3 + T^2 + 2$ & $1$ & $T + 2$ & $1$ & $2T^3 + 2T^2 + 2T + 2$ \\ \hline 
$T^6 + T^5 + T^4 + 1$ & $2T^3 + 2T^2 + 2T + 2$ & $1$ & $T + 2$ & $1$ & $T^3 + T^2 + 2T$ \\ \hline 
$T^6 + 2T^5 + 2T^4 + 2T^3 + 2T^2 + 2T + 2$ & $2T^3$ & $1$ & $T^2 + 2$ & $T + 1$ & $T + 1$  \\ \hline 
$T^6 + T^5 + 2T^4 + 2T^2 + 2T + 2$ & $2T^3 + T + 2$ & $1$ & $T^2 + 2$ & $T + 2$ & $2T + 2$ \\ \hline 
$T^6 + T^5 + T^4 + T^3 + T + 2$ & $2T^3 + 2T^2 + 1$ & $1$ & $T + 2$ & $1$ & $T^3 + 2T^2 + 2$  \\ \hline 
$T^6 + T^5 + T^4 + 2T^2 + 2$ & $2T^3 + T + 2$ & $1$ & $T + 2$ & $1$ & $2T^3 + T^2 + 2T + 2$  \\ \hline 
$T^6 + T^5 + T^3 + T^2 + T + 2$ & $2T^3 + T + 2$ & $1$ & $T + 1$ & $1$ & $2T^3 + 2T + 2$ \\ \hline 
$T^6 + T^5 + T^4 + T^3 + 2T^2 + 2T + 2$ & $2T^3$ & $1$ & $T^2 + T + 1$ & $T + 2$ & $2T + 1$  \\ \hline 
$T^6 + 2T^4 + T^2 + T + 2$ & $2T^3 + T^2 + 2$ & $1$ & $T + 2$ & $1$ & $T^3 + T^2 + 2$  \\ \hline 
$T^6 + T^5 + 2T^4 + 2T^3 + 2T^2 + 2$ & $2T^3 + 2T + 1$ & $1$ & $T + 2$ & $1$ & $2T^3 + T^2 + 2T + 2$  \\ \hline 
$T^6 + T^4 + T^3 + 2T + 2$ & $2T^3 + T^2 + T + 1$ & $1$ & $T + 2$ & $1$ & $T^3 + T + 2$  \\ \hline 
$T^6 + 2T^4 + 2T^3 + T + 1$ & $2T^3 + T^2 + T + 1$ & $1$ & $T^2 + 2$ & $T + 1$ & $T$  \\ \hline 
$T^6 + 2T^4 + T^3 + T^2 + 2$ & $2T^3 + T^2 + 2T$ & $1$ & $T + 2$ & $1$ & $T^3 + 2T + 1$ \\ \hline 
$T^6 + T^3 + T^2 + 1$ & $2T^3 + 2T + 1$ & $1$ & $T + 2$ & $1$ & $2T^2 + T$  \\ \hline 
$T^6 + 2T^3 + 2T + 2$ & $2T^3 + T^2 + 2$ & $1$ & $T + 2$ & $1$ & $T^3 + 2T + 2$  \\ \hline 
$T^6 + T^3 + 2T^2 + 2T + 1$ & $2T^3 + T^2 + T + 1$ & $1$ & $T^2 + 2T$ & $T + 2$ & $T + 1$  \\ \hline 
$T^6 + T^5 + 2T^4 + T^3 + T^2 + 2$ & $2T^3 + 2T + 2$ & $1$ & $T + 1$ & $1$ & $2T^3 + 2T^2 + T + 2$  \\ \hline 
$T^6 + T^5 + 2T^3 + 1$ & $2T^3 + 1$ & $1$ & $T$ & $1$ & $2T^3 + 2T$  \\ \hline 
$T^6 + 2T^3 + 2T^2 + T + 1$ & $2T^3 + T^2 + 2$ & $1$ & $T + 2$ & $1$ & $T^3 + 2T$  \\ \hline 
$T^6 + 2T^5 + T^4 + 2T + 1$ & $2T^3 + T^2 + 2T$ & $1$ & $T + 2$ & $1$ & $2T^3 + 2T + 2$  \\ \hline 
$T^6 + T^2 + 2T + 1$ & $2T^3 + 2T + 2$ & $1$ & $T$ & $1$ & $2T^2 + 2T$  \\ \hline 
$T^6 + 2T^5 + 2T + 2$ & $2T^3$ & $1$ & $T + 2$ & $1$ & $T^3 + 2T^2 + 1$  \\ \hline 
$T^6 + 2T^5 + T^2 + 2T + 1$ & $2T^3 + T^2 + 2$ & $1$ & $T + 2$ & $1$ & $2T^3 + 2T^2 + T$  \\ \hline 
$T^6 + 2T^5 + 2T^4 + T^3 + 1$ & $2T^3 + T^2 + 2$ & $1$ & $T^2 + T$ & $T$ & $2T + 1$  \\ \hline 
\end{tabular}
}
\caption{$q=3$, $g_1=T$, $g_2=1$}
\label{T1}
\end{table}

\begin{table}
\scalebox{0.76}{
\begin{tabular}{ | c || c | c | c | c | c | }
\hline
$\fp$ & $a$ & $\eps(\phi)$ & $c_\pi$ & $c_\phi$ & $\Delta_{\max}$ \\ 
\hline\hline
$T^6 + 2T^5 + 2T^4 + T^3 + T^2 + T + 2$ & $2T^3 + T + 1$ & $2$ & $T^3 + 2T^2 + T$ & $T^2 + 2T + 1$ & $2$  \\ \hline 
$T^6 + 2T^5 + 2T^4 + T^2 + T + 1$ & $T^3 + T + 1$ & $1$ & $T^2 + 1$ & $1$ & $T$ \\ \hline 
$T^6 + T^5 + 2$ & $2T^3 + 2T^2 + 2$ & $2$ & $T$ & $1$ & $2T^4 + T^2 + 2T + 2$  \\ \hline 
$T^6 + 2T^5 + 2$ & $T^3 + 2T^2 + 2$ & $2$ & $T$ & $1$ & $2T^4 + T^2 + T + 2$  \\ \hline 
$T^6 + 2T^5 + T^4 + T^2 + T + 2$ & $T^3 + T$ & $2$ & $T + 2$ & $1$ & $2T^4 + T^2 + 2T + 2$  \\ \hline 
$T^6 + 2T^5 + 2T^4 + 2T^3 + 2T + 2$ & $T^3 + 2T^2 + 1$ & $2$ & $T + 1$ & $1$ & $2T^4 + 2T^3 + 2T$  \\ \hline 
$T^6 + 2T^4 + 1$ & $T^2 + 1$ & $1$ & $T^3 + 2T$ & $T^2 + 2$ & $2$  \\ \hline 
$T^6 + T^5 + 2T^4 + T^2 + 2T + 1$ & $2T^3 + 2T + 1$ & $1$ & $T^2 + 1$ & $1$ & $2T$  \\ \hline 
$T^6 + 2T^4 + T^2 + T + 2$ & $T^3 + T^2 + 2T + 2$ & $2$ & $T$ & $1$ & $2T^4 + 2T^3 + T^2 + 2T$  \\ \hline 
$T^6 + T^5 + T^4 + T^2 + 2T + 2$ & $2T^3 + 2T$ & $2$ & $T + 1$ & $1$ & $2T^4 + T^2 + T + 2$  \\ \hline 
$T^6 + T^5 + 2T^4 + T^3 + T + 2$ & $2T^3 + 2T^2 + 1$ & $2$ & $T + 2$ & $1$ & $2T^4 + T^3 + T$  \\ \hline 
$T^6 + T^3 + T^2 + 2T + 2$ & $T^3 + 2T^2 + 2T + 1$ & $2$ & $T$ & $1$ & $2T^4 + T^3 + 2T^2 + 2T$  \\ \hline 
$T^6 + 2T^2 + 1$ & $T^2 + 2$ & $1$ & $T^3 + T$ & $T$ & $2$  \\ \hline 
$T^6 + T^3 + 2T^2 + 2T + 1$ & $T^3 + 2T^2$ & $1$ & $T + 1$ & $1$ & $T^3 + 2T^2 + 2$  \\ \hline 
$T^6 + 2T^4 + 2T^3 + T^2 + 2T + 2$ & $2T^3 + 2T^2 + 2T$ & $2$ & $T^2 + 2T + 1$ & $T + 1$ & $2T^2 + 2$  \\ \hline 
$T^6 + T^5 + 2T^4 + 2T^3 + T^2 + 2T + 2$ & $T^3 + 2T + 1$ & $2$ & $T^3 + T^2 + T$ & $T^2 + T + 1$ & $2$  \\ \hline 
$T^6 + 2T^3 + T^2 + T + 2$ & $2T^3 + 2T^2 + T + 1$ & $2$ & $T$ & $1$ & $2T^4 + 2T^3 + 2T^2 + T$  \\ \hline 
$T^6 + 2T^3 + 2T^2 + T + 1$ & $2T^3 + 2T^2$ & $1$ & $T + 2$ & $1$ & $2T^3 + 2T^2 + 2$  \\ \hline 
$T^6 + 2T^4 + T^2 + 2T + 2$ & $2T^3 + T^2 + T + 2$ & $2$ & $T$ & $1$ & $2T^4 + T^3 + T^2 + T$  \\ \hline 
$T^6 + 2T^4 + T^3 + T^2 + T + 2$ & $T^3 + 2T^2 + T$ & $2$ & $T^2 + T + 1$ & $T + 2$ & $2T^2 + 2$ \\ \hline 
\end{tabular}
}
\caption{$q=3$, $g_1=1$, $g_2=T$}
\label{1T}
\end{table}

\begin{table}
	\scalebox{0.76}{
	\begin{tabular}{ |p{7cm}||p{4cm}|p{.7cm}|p{2.5cm}|p{1.5cm}|p{3cm}|  }
		\hline
		$\mathfrak{p}$ & $a$ & $\eps(\phi)$ & $c_{\pi}$ & $c_{\phi}$&  $\cD_K$\\
		\hline
		$T^5 + wT^2 + w^2T + w$ & $T + w$ & 1 & $T + w $ & 1 & 1\\
		\hline
		$T^5 + wT^4 + w^2T^3 + w^2T^2 + wT + w^2$ & $wT^2 + T$ & 1& $T$ & 1 & $ T + w^2$\\
		\hline
		$T^6 + T^5 + w^2T^4 + w^2T^3 + T^2 + w^2T + w^2$ & $w^2T^2 + T + w$ & $1$ & $T^2 + wT + w^2$ & $T + w^2$ & 1\\
		\hline
		$T^7 + T^6 + T^5 + wT^4 + wT + w$ & $T^3 + wT^2 + wT + w^2$ & 1 & $T + w^2$ & 1 & $T^2 + T + 1$\\
		\hline
		$T^7 + T^5 + w^2T^4 + w^2T^2 + 1$ & $0$ & 1 & $T^3 + T^2$ & $T^2$ & $\sqrt{T}$\\
		\hline
		$T^7 + w^2T^5 + T^4 + T^3 + wT^2 + w^2T + w$ & $wT^2 + w$ & 1 & $T^2+1$ & $T+1$ & 1\\
		\hline
		$T^8 + T^6 + wT^5 + w^2T^4 + wT^2 + T + w^2$   & 0   & 1 &  $T^2 + w$ & $T + w+1$ & $\sqrt{T}$\\
		\hline
		$T^9 + T^8 + wT^5 + w^2T^4 + wT^3 + w$ & $T^4 + T^3 + T^2 + 1$ & 1& $T+1$ & 1 & $T^3 + T + 1$\\
		\hline
		$T^{11} + wT^{10} + w^2T^9 + wT^8 + w^2T^7 + wT^4 + wT^3 + wT^2 + T + w^2$ & $wT^5 + wT^4 + T^2 + 1$ & 1 & $T+1$ & 1 &
		$T^4 + w^2T + w^2$ \\
		\hline
	\end{tabular}
}
\caption{$q=4$, $g_1=T$, $g_2=1$}
\label{Teven}
\end{table}

\begin{example} Let $q=3$ and $\phi_T=T+\tau+T\tau^2$. 
We know from Theorem \ref{thm11} that for any fixed prime $\fq$ there exist infinitely many $\fp$ 
such that $c_{\phi\otimes \F_\fp}=\chi(\cO_{\fp, K}/\cE_{\phi\otimes \F_\fp})$ is divisible by $\fq$. We compute that: 

If $\fq=T^2 -T -1$, then the smallest degree $\fp$ is $T^6 + T^5 + T^3 -1$. 

If $\fq=T^3 -T + 1$, then the smallest degree $\fp$ is 
$T^6 + T^4 + T^3 + T^2 -T -1$. 

If $\fq=T^4 -T^3 -1$, then the smallest degree $\fp$ is
$T^{10} + T^9 -T^7 + T^5 -T + 1$. 

By the same corollary, we can also fix two primes $\fq_1$ and $\fq_2$ and find infinitely many $\fp$ such 
that $\fq_1$ divides $c_{\pi_\fp}/c_{\phi\otimes\F_\fp}=\chi(\cE_{\phi\otimes\F_\fp}/A[\pi_\fp])$ 
and $\fq_2$ divides $c_{\phi\otimes\F_\fp}=\chi(\cO_{\fp, K}/\cE_{\phi\otimes \F_\fp})$. 
If $\fq_1=T$ and $\fq_2=T^2 -T -1$, then such a prime of smallest degree is $\fp=T^7 -T^5 -T^4 -1$. 
On the other hand, if $\fq_1=T^2 -T -1$ and $\fq_2=T$, then the smallest degree $\fp$ is 
$T^9 + T^5 + T^4 + T^2 -T + 1$. 
\end{example}




\end{document}